\newlength{\vslength}
\newcommand{\ie}{{\it i.e.}}
\newcommand{\cf}{{\it c.f.}}
\newcommand{\eg}{{\it e.g.}}
\newcommand{\etc}{{\it etcetera}}
\newcommand{\scrB}{{\mathscr B}}
\newcommand{\scrG}{{\mathscr G}}
\newcommand{\scrP}{{\mathscr P}}
\newcommand{\scrX}{{\mathscr X}}
\renewcommand{\emptyset}{{\varnothing}}
\newcommand{\set}[1]{\left\{ #1 \right\}}
\newcommand{\ft}[2]{{\textstyle{\frac{#1}{#2}}}}
\newcommand{\conv}[1]%
  {{\mathrel{\,\xrightarrow{\widthof{\,#1\,}}\,}}}
\newcommand{\convas}[1]%
  {{\mathrel{\,\xrightarrow{\widthof{\,#1\text{-a.s.}\,}}\,}}}
\newcommand{\convprob}[1]%
  {{\mathrel{\,\xrightarrow{\widthof{\,#1\,}}\,}}}
\newcommand{\convweak}[1]%
  {{\mathrel{\,\xrightarrow{\widthof{\,#1\text{-w.}\,}}\,}}}
\newcommand{\twobytwo}[4]%
  {\left(\begin{array}{cc} #1 & #2 \\ #3 & #4 \end{array}\right)}
\newcommand{\twovec}[2]%
  {\left({\begin{array}{c} #1\\#2 \end{array}}\right)}
\newcommand{\ceiling}[1]{\left\lceil #1 \right\rceil}
\newcommand{\floor}[1]{\left\lfloor #1 \right\rfloor}
\renewcommand{\qedsymbol}{$\Box$}
\newcommand{\closebox}{{\hfill\qedsymbol}}
\newtheoremstyle{customtheorem}
  {0.5em}
  {0.2em}
  {\itshape}
  {}
  {\scshape}
  {}
  {1ex}
  {}
\theoremstyle{customtheorem}
\newtheorem{theorem}{Theorem}[section]
\newtheorem{lemma}[theorem]{Lemma}
\newtheorem{proposition}[theorem]{Proposition}
\newtheorem{corollary}[theorem]{Corollary}
\newtheorem{definition}[theorem]{Definition}
\newtheoremstyle{customremark}
  {0.5em}
  {0.2em}
  {}
  {}
  {\scshape}
  {}
  {1ex}
  {}
\theoremstyle{customremark}
\renewenvironment{proof}{\par\noindent{\scshape Proof}\;}{\closebox\par}
\newtheorem{remark}[theorem]{Remark}
\newtheorem{example}[theorem]{Example}
\newcommand{\comment}[1]{{}}
\newcommand{\extra}[1]{{}}
\begin{document}

\thispagestyle{empty}

\title{\vspace*{-16mm}
  Confidence sets in a sparse stochastic block model with two communities of
  unknown sizes
  }
\author{
  B.~J.~K.~{Kleijn}${}^{1}$ and J.~van~{Waaij}${}^{2}$\\[1mm]
  {\small\it ${}^{1}$ Korteweg-de~Vries Institute for Mathematics,
    University of Amsterdam}\\
  {\small\it ${}^{2}$ Department of Mathematical Sciences,
    University of Copenhagen}  
  }
\date{Aug 2021}
\maketitle

\begin{abstract}\noindent
In a sparse stochastic block model with two communities of unequal
sizes we derive two posterior concentration inequalities, that imply
(1) posterior (almost-)exact recovery of the community structure 
under sparsity bounds comparable to well-known sharp bounds in the
planted bi-section model; (2) a construction of confidence sets for
the community assignment from credible sets, with
finite graph sizes. The latter enables exact frequentist uncertain
quantification with Bayesian credible sets at non-asymptotic
graph sizes, where posteriors can be simulated well. There turns out
to be no proportionality between credible and confidence levels: for
given edge probabilities and a desired confidence level, there exists
a critical graph size where the required credible level drops sharply
from close to one to close to zero. At such graph sizes the
frequentist decides to include not most of the
posterior support for the construction of his confidence set, but
only a small subset of community assignments containing the highest
amounts of posterior probability (like the maximum-a-posteriori
estimator). It is argued that for the proposed construction of
confidence sets, a form of early stopping applies to MCMC sampling
of the posterior, which would enable the computation of confidence
sets at larger graph sizes.\\[.3em]
{\bf Keywords} posterior concentration, community detection,
sparse random graph, uncertainty quantification,
exact finite-sample confidence set\\
{\bf MSC} 05C80, 60B10, 62G05, 62G15, 82B26, 94C15
\end{abstract}


\section{Communities in sparse random graphs}
\label{sec:intro}

The stochastic block model \citep{Holland83} is an inhomogeneous version of the Erd\H os-R\'enyi random graph model \citep{Erdos59}: vertices belong to communities and edges occur independently with probabilities that depend on the communities of the vertices they connect. If we think of a resulting $n$-vertex random graph $X^n$ as data and the community assignments of the vertices as unobserved, a statistical challenge presents itself regarding estimation of the vertices' community assignments, a task referred to as \emph{community detection} \citep{Girvan02}. The stochastic block model and its generalizations have applications in physics, biology, sociology, image processing, genetics, medicine, logistics, \etc\ and are widely employed as canonical models for the study of clustering and community detection \citep{Fortunato10}.

In this paper, we consider sparse versions of the stochastic block model with two communities of unknown sizes (generalizing the so-called planted bi-section model \citep{Abbe18}). The main goal of this paper is to show that for given graph size and confidence level, credible sets for community assignments of high-enough credible level are (or can be enlarged to form) confidence sets. The derivation hinges on lower bounds for the expected posterior probability in (Hamming balls around) the true community assignment. These bounds are also sufficient to show that the posterior recovers community assignments consistently and, in that sense, are comparable to known sharp bounds in the stochastic block model with two equal communities \citep{Massoulie14,Abbe16,Mossel16}.

In subsection~\ref{sub:pbm} we discuss the literature on community detection for the planted bi-section model, focussing on necessary and sufficient conditions for exact and almost-exact recovery with varying degrees of edge sparsity. In subsection~\ref{sub:conclusions} we indicate pointwise which contributions this paper makes.

\subsection{The planted bi-section model}
\label{sub:pbm}

Most interest in the stochastic block model has come from network science and machine learning, in the form of a large number of algorithms that detect communities, with due attention for computational efficiency and scalability to large data sets. From the statistical perspective, algorithms for community detection are estimators for the unobserved community assignment. Estimation methods used for the community detection problem include spectral clustering (see \citep{Krzakala13} and many others), maximization of the likelihood and other modularities \citep{Girvan02,Bickel09,Choi12,Amini13}, semi-definite programming \citep{Hajek16,Guedon16}, and penalized ML detection of communities with minimax optimal mis-classification ratio \citep{Zhang16,Gao17}. More generally, we refer to \citep{Abbe18} and the informative introduction of \citep{Gao17} for extensive bibliographies and a more comprehensive discussion. Bayesian methods have been popular throughout, \eg\ the original work \citep{Snijders97}, the work of \citep{Decelle11a,Decelle11b} and, for example, \citep{Suwan16} based on an empirical prior choice. MCMC simulation of posteriors is discussed, for example, in \citep{mcdaid13,Geng19,Jiang21}.

Over the last decade there has also been a great interest in asymptotic lower bounds for edge sparsity that leave consistent community detection (only just) possible as the graph size $n$ grows. Particularly, which conditions on edge probabilities enable estimation of the true community assignments correctly with high probability? (\emph{exact recovery}, see definition~\ref{def:exact}); or correctly for all but a (possibly vanishing) fraction of the vertices with high probability (\emph{almost-exact recovery} with a certain error-rate, see definition~\ref{def:detect}). In \citep{Dyer89,Decelle11a,Decelle11b,Abbe16,Massoulie14,Mossel16} and many other publications, asymptotic limitations on the estimation problem are studied in the context of the so-called \emph{planted bi-section model}, which is a stochastic block model with two equally-sized communities of $n$ vertices each and edge probabilities $p_n$ (within communities) and $q_n$ (between communities).

The planted bi-section model with edge probabilities $p_n=a_n\log(n)/n$, $q_n=b_n\log(n)/n$ and $a_n,b_n=O(1)$ (the so-called Chernoff-Hellinger sparsity phase, in which expected degrees grow logarithmically with $n$) was considered in \citep{Massoulie14,Mossel15,Mossel16,Abbe16}: assuming that $a_n, b_n$ stay bounded away from zero and infinity, the communities in the planted bi-section graph with $2n$ vertices can be \emph{recovered exactly}, if and only if,
\begin{equation} \label{eq:mnscritical}
  \Bigl(\bigl(\sqrt{a_n}-\sqrt{b_n}\bigr)^2-2\Bigr)\log(n)
    + \log(\log(n))\to \infty,
\end{equation}
(see \citep{Mossel16}). With $p_n=c_n/n$, $q_n=d_n/n$ and $c_n,d_n=o(\log(n))$ (the so-called Kesten-Stigum sparsity phase of the problem, typically with $c_n,d_n=O(1)$ which keeps the expected degree of vertices bounded in the limit), \citep{Decelle11a,Decelle11b} conjectured that almost-exact recovery is possible in the planted bi-section model, if and only if,
\begin{equation} \label{eq:decellescondition}
  \frac{(c_n-d_n)^2}{2(c_n+d_n)} > 1.
\end{equation}
Additionally, \citep{Mossel16} prove that almost-exact recovery with a \emph{vanishing} fraction of possible mis-assignments (termed \emph{weak consistency} \citep{Mossel16}) is possible (by any estimator or algorithm), if and only if,
\begin{equation} \label{eq:MNSdetect}
  \frac{(c_n-d_n)^2}{2(c_n+d_n)}\to\infty.
\end{equation}
Conditions (\ref{eq:mnscritical})--(\ref{eq:MNSdetect}) are not only there to lower-bound the sparsity of edges in an absolute sense, but also guarantee sufficient separation \citep{Banerjee18} from the Erd\H os-R\'enyi graph ($p_n=q_n$) in which communities are not statistically identifiable.

\subsection{Posterior convergence and confidence sets for
community assignments}
\label{sub:conclusions}

In this paper we continue the study of sparse stochastic block models with two communities, but we generalize the assumption that both communities are of equal sizes; any two sizes that add up to $n$ vertices are permitted. In section~\ref{sec:commrecovery} we derive bounds for exact and almost-exact recovery with posteriors. In the Chernoff-Hellinger phase,
\begin{itemize}
\item[(i.)] the condition on edge sparsity for exact recovery is an analogue of condition (\ref{eq:mnscritical}) that takes into account the fact that community sizes are unknown (see corollary~\ref{cor:mnscritical}).
\end{itemize}
In the Kersten-Stigum phase we derive a sharp lower bound for the posterior mass in Hamming balls centred on the true community assignments of radii $k_n\geq a_nn$, leading to a condition that relates edge sparsity and error rates,
\begin{equation}
  \label{eq:KvWcritical}
  a_nn\Bigl(\log(a_n)+\frac14(\sqrt{c_n}
    -\sqrt{d_n})^2-1\Bigr)\to\infty.
\end{equation}
With~(\ref{eq:KvWcritical}) it is shown:
\begin{itemize}
\item[(ii.)] by how much the lower bound in condition~(\ref{eq:decellescondition}) has to be raised to characterize almost-exact recovery with a non-vanishing fraction of mis-assigned vertices and unknown community sizes (see corollary~\ref{cor:decellescondition});
\item[(iii.)] that the limit (\ref{eq:MNSdetect}) continues characterize almost-exact recovery with a vanishing fraction of mis-assigned vertices when community sizes are unknown (see corollary~\ref{cor:MNSdetect});
\item[(iv.)] that in any situation in which posterior almost-exact recovery with error rates as small as $O(\log(n))$ is possible, the posterior recovers the community assignment exactly (see example~\ref{ex:lognerrorrate}).
\end{itemize}

Calculation, approximation or simulation of a posterior distribution is considered computationally costly; if the statistical goal is only the estimation of the community assignments, more efficient algorithms are known. However, the lack of sampling distributions for said efficient algorithms makes answering more complex statistical questions (like uncertainty quantification and testing of hypotheses) prohibitively hard. The second contribution in this paper is a detailed demonstration that frequentist uncertainty quantification can be based on the posterior distribution \emph{at finite values of the graph size $n$}. More particularly, in section~\ref{sec:pbmuncertainty} it is shown that:
\begin{itemize}
\item[(vi.)] posterior exact recovery (as in theorem~\ref{thm:exactrecovery}) permits the interpretation of credible sets as confidence sets, with a lower bound for the credible level in terms of the desired confidence level (see proposition~\ref{prop:exactcredconf});
\item[(vii.)] posterior almost-exact recovery with an error rate $k_n$ (as in theorem~\ref{thm:almostexactrecovery}) enables interpretation of $k_n$-enlarged credible sets as confidence sets, again with a lower bound for the credible level in terms of the desired confidence level (see proposition~\ref{prop:almostexactcredconf}).
\end{itemize}
As it turns out, there is no proportionality between a desired confidence level and the required credible level for a credible set (or its enlargement) to be a confidence set of said desired level. The relationship is more complex and revolves around the bounds derived in section~\ref{sec:commrecovery}:
\begin{itemize}
\item[(viii.)] for given edge probabilities $p,q$ and desired confidence level $\alpha$, a \emph{critical graph size} $n(p,q;\alpha)$ exists that distinguishes between cases in which credible sets of relatively low credible level can serve as confidence sets, and when it is required to use credible sets of relatively high credible level (see figure~\ref{fig:crediblelevel}).
\end{itemize}
When the graph size lies above its critical value, the frequentist decides to include not most of the posterior support for the construction of his confidence set, but only a small subset of community assignments containing the highest amounts of posterior probability (like the maximum-a-posteriori estimator). In discussion section~\ref{sec:discussion}, the latter point is used to argue that a form of early stopping in the MCMC sampling of the posterior may give rise to confidence sets at large graph sizes.

\subsection*{Acknowledgements} The authors thank E.~Mossel and J.~Neeman for a helpful discussion on necessary conditions for exact recovery. BK thanks P. Bickel for his encouragement to pursue the confidence-sets-from-credible-sets question.

\section{The sparse two-community stochastic block model}
\label{sec:pbm}

In the general stochastic block model, $n\geq1$ vertices are assigned to $K\geq2$ communities with an unobserved \emph{community assignment vector} $\theta_n=(\theta_{n,1},\ldots,\theta_{n,n})$, $\theta_{n,i}\in\{0,\ldots,K-1\}$ for $1\leq i\leq n$. The observation is a set $X^n=\{X_{ij}:1\leq i< j\leq n\}$ of undirected edges (with no self-loops), each of which occur independently with probabilities that depend on the communities of the vertices they connect. Our statistical goal is inference on $\theta_n$ using $X^n$, in block model with edges that become increasingly sparse with growing $n$, \eg\ with asymptotic degrees that stay bounded or grow only as $\log(n)$.

In the \emph{planted bi-section model} of \citep{Dyer89,Decelle11a,Decelle11b,Abbe16,Massoulie14,Mossel16}, $K=2$ and the two communities have equal sizes. We generalize to community assignments where one community (the smallest) has $0\leq m\leq\floor{n/2}$ vertices (denoted $m_\theta$ when the underlying community assignment $\theta$ is of importance) and the other (the largest) has $n-m$. Community assignments $\theta_{n,i}$ are either $0$ or $1$ (for the largest and smallest communities respectively). The parameter space $\theta_n$ can be written as a union,
\[
  \Theta_n=\bigcup_{m=0}^{\floor{n/2}}\Theta_{n,m},
\]
where $\Theta_{n,m}$ denotes the set of those $\theta_n\in\{0,1\}^{n}$ with $\Sigma_i\theta_{n,i}=m$, which has $\binom{n}{m}$ elements. (For even $n$ there is a note of identifiability: because, as we shall see later, $\theta_n=(\theta_{n,1},\ldots,\theta_{n,n})$ and $(1-\theta_{n,1},\ldots,1-\theta_{n,n})$ (notation $1-\theta_n$) induce the same law for $X^n$, identifiability is guaranteed if we define $\Theta_{n,n/2}=\{\theta_n:\Sigma_i\theta_{n,i}=n/2$ and $\theta_1=0$\}, and $\Theta_{n,n/2}$ has $\frac12\binom{n}{n/2}$ elements.) The full parameter set $\Theta_n$ has $2^{n-1}$ elements. It is noted that \(m=0\) is allowed (an Erd\H os-R\'enyi graph displaying no community structure).

The random graph $X^n$ takes its values in a space $\scrX_n$ with law $P_{\theta_n}$ under $\theta_n\in\Theta_n$. The ($n$-dependent) probability of an edge between vertices \emph{within a community} is denoted $p_n\in[0,1]$; the ($n$-dependent) probability of an edge \emph{between communities} is denoted $q_n\in[0,1]$,
\begin{equation}
  \label{eq:pbm}
  Q_{ij}(\theta_n):=P_{\theta_n}(X_{ij}=1)=\begin{cases}
    \,\,p_n,&\quad\text{if $\theta_{n,i}=\theta_{n,j}$,}\\
    \,\,q_n,&\quad\text{if $\theta_{n,i}\neq\theta_{n,j}$.}
  \end{cases}
\end{equation}
Edge sparsity distinguishes the \emph{Chernoff-Hellinger phase} of the model (where we take $a_n,b_n=O(1)$ and $p_n=a_nn^{-1}\log n$, $q_n=b_nn^{-1}\log n$) and the sparser \emph{Kesten-Stigum phase} (where we take $c_n,d_n=O(1)$ (or at most $o(\log(n))$) and $p_n=c_nn^{-1}$, $q_n=d_nn^{-1}$). Given $\theta\in\Theta_n$, the probability density for $P_{\theta_n}$ at $x^n\in\scrX_n$ is given by $p_{\theta_n}(x^n)=\prod_{i<j} Q_{ij}(\theta_n)^{x_{ij}}(1-Q_{ij}(\theta_n))^{1-x_{ij}}$. Asymptotically the first statistical question in this model concerns estimation of the community assignments $\theta_n$ in consistent ways, that is, (close to) correctly with probability growing to one as $n\to\infty$. In the Chernoff-Hellinger phase a suitable formulation of consistency is the following.
\begin{definition}
\label{def:exact}
Given community assignments $\theta_n$ for all $n\geq1$, an estimator sequence $\hat{\theta}_n:\scrX_n\to\Theta_n$ is said to \emph{recover $\theta_n$ exactly} if $\hat{\theta}_n$ is correct with high probability, \ie,
\[
  P_{\theta_n}\bigl(\,\hat{\theta}_n(X^n)=\theta_n\,\bigr)\to1,
\]
as $n\to\infty$. 
\end{definition}
In the Kesten-Stigum phase the appropriate form of consistency is more diffuse: rather than looking for exact matches, we allow for controlled differences between the estimated and true community assignments. For two sequences $\theta_n,\eta_n\in\Theta_n$, the so-called \emph{Hamming distance} $h$ denotes the number of differing bits, that is: $h(\theta_n,\eta_n)= \Sigma_i|\theta_{n,i}-\eta_{n,i}|$. Since $\theta_n$ and $1-\theta_n$ induce the same law for $X^n$, $\theta_n$ is considered close to $\eta_n$ when either $h(\theta_n,\eta_n)$ or $n-h(\theta_n,\eta_n)$ is small. This motivates the following definition,
\begin{equation}
  \label{eq:kmetric}
  k(\theta_n,\eta_n) = h(\theta_n,\eta_n)\wedge\bigl(n-h(\theta_n,\eta_n)\bigr),
\end{equation}
which defines a metric on $\Theta_n$.
\begin{definition}
\label{def:detect}
Let $\theta_n\in\Theta_n$ and some sequence of positive integers $(k_n)$ of order $k_n=O(n)$ be given. An estimator sequence $\hat{\theta}_n:\scrX_n\to\Theta_n$ is said to \emph{recover $\theta_{0,n}$ almost-exactly with error rate $k_n$}, if,
\[
  P_{\theta_{n}}\bigl(\,
    k\bigl(\hat{\theta}_{n}(X^n),\theta_{n}\bigr) \leq k_n
    \,\bigr)\to 1.
\]
\end{definition}
Note that $0\leq k(\theta_n,\eta_n)\leq n/2$ for any $\theta_,\eta_n\in\Theta_n$, so the error-rate must satisfy $0\leq k_n\leq n/2$.


\section{
  Posterior concentration}
\label{sec:postconcentration} 

In what follows we specialize to the Bayesian approach: we choose prior distributions $\Pi_n$ on $\Theta_n$ for all $n\geq1$, denoting probability mass functions by $\pi_n:\Theta_n\to[0,1]$. Throughout we assume that for all $\theta_n\in\Theta_n$, $\pi_n(\theta_n)>0$. In later sections we specialize to uniform priors: for every $n\geq1$ and every $\theta_n\in\Theta_n$, $\pi_n(\theta_n)=|\Theta_n|^{-1}=2^{-(n-1)}$.

The posterior for a set $A\subset\Theta_n$ is calculated,
\[
  \Pi(A|X^n)={\displaystyle \sum_{\theta_n\in A}
    p_{\theta_n}(X^n)\, \pi_n(\theta_n)}
    \biggm/
  {\displaystyle \sum_{\theta'_n\in\Theta_n}
    p_{\theta'_n}(X^n)\, \pi_n(\theta'_n)}.
\]
The central upper bound on posterior mass for sets of the type relevant in definitions~\ref{def:exact} and~\ref{def:detect} is given in proposition~\ref{prop:postconvset}, which makes use of the following definitions: fix $n\geq1$ and for $\theta_n,\eta_n\in\Theta_n$, define,
\begin{equation} \label{eq:thesetsD}
  \begin{split}
    D_{1}(\theta_n,\eta_n)&=\{(i,j)\in\{1,\ldots,n\}^2:\,i<j,\,
      \theta_{n,i}=\theta_{n,j},\,
      \eta_{n,i}\neq\eta_{n,j}\},\\
    D_{2}(\theta_n,\eta_n)&=\{(i,j)\in\{1,\ldots,n\}^2:\,i<j,\,
      \theta_{n,i}\neq\theta_{n,j},\,
      \eta_{n,i}=\eta_{n,j}\}.
  \end{split}
\end{equation}
The number $D_1$ is the number of edges (from the complete graph with $n$ vertices) whose probabilities change from $p_n$ to $q_n$ upon replacement of $\theta_n$ with $\eta_n$ (and $D_{2}$ how many edges change probabilities from $q_n$ to $p_n$). Note that the total number of edges that change probabilities is given by $|D_1\cup D_2|=|D_1|+|D_2|$. Furthermore, let,
\begin{equation}
  \label{eq:AffBernoulli}
  \rho(p,q)=p^{1/2}q^{1/2}+(1-p)^{1/2}(1-q)^{1/2},
\end{equation}
denote the Hellinger-affinity between two Bernoulli-distributions with parameters $p,q\in(0,1)$.
\begin{proposition}
\label{prop:postconvset}
Fix $n\geq2$ and a prior probability mass function $\pi_n$ on $\Theta_n$ of full support. Suppose that for some $\theta_n\in\Theta_n$, we observe a graph $X^n\sim P_{\theta_n}$. Let $S_n\subset\Theta_n\setminus\{\theta_n\}$ be non-empty. Then,
\begin{itemize}
\item[(1.)] the number of edge probability changes $d_n$ is lower bounded,
  \begin{equation}
    \label{eq:B-lowerbound}
    d_n=\min_{\eta_n\in S_n}|D_{1}(\theta_n,\eta_n)\cup D_{2}(\theta_n,\eta_n)|
      \geq \min_{\eta_n\in S_n}|m_{\theta_n}-m_{\eta_n}|(n-|m_{\theta_n}-m_{\eta_n}|),
  \end{equation}
\item[(2.)] the posterior mass of $S_n$ satisfies the upper bound, 
  \begin{equation}
    \label{eq:ordervstestpwr}
    P_{\theta_n}
    \Pi_n\bigl(S_n\bigm|X^n\bigr)
    \leq \rho(p_n,q_n)^{d_n}\sum_{\eta_n\in S_n}
    \sqrt{\frac{\pi_n(\eta_n)}{\pi_n(\theta_n)}}.
  \end{equation}
\end{itemize}
\end{proposition}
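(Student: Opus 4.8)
The plan is to treat the two claims separately: part (1.) is a purely combinatorial count combined with a concavity argument, while part (2.) is a Hellinger-affinity bound on the posterior of a standard Bayesian type, made possible by the independence of the edges.

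For part (1.), I would fix $\eta_n\in S_n$ and partition the vertex set $\{1,\dots,n\}$ according to the pair $(\theta_{n,i},\eta_{n,i})\in\{0,1\}^2$, writing $n_{ab}$ for the number of $i$ with $\theta_{n,i}=a$ and $\eta_{n,i}=b$. An edge $(i,j)$ lies in $D_1\cup D_2$ precisely when its within/between-community status differs under $\theta_n$ and $\eta_n$, i.e.\ when exactly one of $\theta_{n,i}=\theta_{n,j}$, $\eta_{n,i}=\eta_{n,j}$ holds; a short case check over the vertex types shows this happens exactly for edges joining a vertex of type in $\{00,11\}$ to one of type in $\{01,10\}$. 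Counting gives the clean identity
\[
  |D_1(\theta_n,\eta_n)\cup D_2(\theta_n,\eta_n)| = (n_{00}+n_{11})(n_{01}+n_{10}) = h(n-h),
\]
where $h=h(\theta_n,\eta_n)=n_{01}+n_{10}$ is the Hamming distance. Since $m_{\theta_n}-m_{\eta_n}=n_{10}-n_{01}$, one has $\delta:=|m_{\theta_n}-m_{\eta_n}|\le h$. It then remains to deduce $h(n-h)\ge\delta(n-\delta)$ from the concavity and symmetry about $n/2$ of $x\mapsto x(n-x)$; this reduces to showing that $\delta$ is at least as far from $n/2$ as $h$ is. When $h\le n/2$ this is immediate from $\delta\le h$; when $h>n/2$ I would invoke the defining constraint $m_{\theta_n},m_{\eta_n}\le\floor{n/2}$ (every element of $\Theta_n$ has at most $n/2$ ones), which forces $n_{10}+n_{11}\le n_{00}+n_{01}$ and hence $\delta\le n_{00}\le n-h$. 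Taking the minimum over $\eta_n\in S_n$ of both sides of the per-$\eta_n$ inequality then yields (\ref{eq:B-lowerbound}).

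For part (2.), I would first discard all but the $\theta_n$-term in the denominator of the posterior, giving the pointwise bound $\Pi_n(S_n\mid X^n)\le\sum_{\eta_n\in S_n}p_{\eta_n}(X^n)\pi_n(\eta_n)/(p_{\theta_n}(X^n)\pi_n(\theta_n))$. The essential step is to introduce a square root: since $\Pi_n(S_n\mid X^n)\in[0,1]$ we have $\Pi_n(S_n\mid X^n)\le\Pi_n(S_n\mid X^n)^{1/2}$, and subadditivity of the square root on the sum above gives
\[
  \Pi_n(S_n\mid X^n) \le \sum_{\eta_n\in S_n}\sqrt{\frac{\pi_n(\eta_n)}{\pi_n(\theta_n)}}\,
    \sqrt{\frac{p_{\eta_n}(X^n)}{p_{\theta_n}(X^n)}}.
\]
Taking $P_{\theta_n}$-expectations and using $P_{\theta_n}\sqrt{p_{\eta_n}/p_{\theta_n}}=\sum_{x^n}\sqrt{p_{\eta_n}(x^n)p_{\theta_n}(x^n)}$, the right factor is exactly the Hellinger affinity between $P_{\theta_n}$ and $P_{\eta_n}$. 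Because the edges are independent, the densities factorize over $i<j$ and the affinity factorizes into a product of Bernoulli affinities $\rho(Q_{ij}(\theta_n),Q_{ij}(\eta_n))$; each unchanged edge contributes $\rho(p_n,p_n)=\rho(q_n,q_n)=1$ and each changed edge contributes $\rho(p_n,q_n)$, so the product equals $\rho(p_n,q_n)^{|D_1(\theta_n,\eta_n)\cup D_2(\theta_n,\eta_n)|}$.

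Finally, since $0\le\rho(p_n,q_n)\le1$ and the exponent is at least $d_n$ by definition, each summand is bounded by $\rho(p_n,q_n)^{d_n}$, and pulling this common factor out of the sum yields (\ref{eq:ordervstestpwr}). I expect the combinatorial identity together with the concavity step of part (1.) --- in particular the $h>n/2$ case, where the constraint $m\le\floor{n/2}$ is indispensable --- to be the most delicate part; the posterior bound in (2.) should be routine once the square-root trick is in place, the only points needing care being the factorization of the Hellinger affinity over edges and the monotonicity used to replace the edge-dependent exponent by $d_n$.
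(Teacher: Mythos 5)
Your proposal is correct, and part (2.) takes a genuinely different route from the paper. For part (1.) you reproduce, in substance, the appendix argument of the paper (partition of the vertices by type $(\theta_{n,i},\eta_{n,i})$, the identity $|D_1\cup D_2|=h(n-h)$, and monotonicity of $x\mapsto x(n-x)$); your treatment is in fact slightly more careful, since you handle explicitly the case $h>n/2$ via the constraint $m_{\theta_n},m_{\eta_n}\le\floor{n/2}$ (giving $\delta\le n-h$), a case the paper's appendix passes over by phrasing everything in terms of the metric $k=h\wedge(n-h)$. For part (2.) the paper goes through a testing argument: it invokes lemma~2.2 of \citep{Kleijn21} to reduce the posterior bound to the existence of tests, constructs a likelihood-ratio test for each $\eta_n\in S_n$ whose power is bounded by the Hellinger affinity via moment generating functions of the binomial counts $(S_n,T_n)$ (lemma~\ref{lem:testingpower}), and then aggregates with $\phi_{S_n}=\max_{\eta_n}\phi_{\eta_n}$. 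You instead bound the posterior directly: discard all but the $\theta_n$-term in the denominator (legitimate since $p_{\theta_n}(X^n)>0$ holds $P_{\theta_n}$-a.s.), apply $\Pi\le\sqrt{\Pi}$ and subadditivity of the square root, and evaluate $P_{\theta_n}\sqrt{p_{\eta_n}/p_{\theta_n}}$ as a product of Bernoulli affinities over independent edges, which is exactly $\rho(p_n,q_n)^{|D_1\cup D_2|}$. The two computations of the affinity are of course the same; what differs is the scaffolding. Your route is more elementary and self-contained (no auxiliary test-construction lemmas, no union of tests), while the paper's route makes the connection to minimax testing explicit and reuses machinery from \citep{Kleijn21} that the authors also lean on elsewhere. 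Both yield the identical bound (\ref{eq:ordervstestpwr}), including the final replacement of the edge-dependent exponent by $d_n$ using $\rho(p_n,q_n)\le1$.
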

\begin{proof}
Because $\eta_n$ and $\theta_n$ differ in the community assignments of $|m_{\theta_n}-m_{\eta_n}|$ vertices, there are $|m_{\theta_n}-m_{\eta_n}|(n-|m_{\theta_n}-m_{\eta_n}|)$ edges that belong to either \(D_{1}(\theta_n,\eta_n)\) or \(D_{2}(\theta_n,\eta_n)\), establishing inequality (\ref{eq:B-lowerbound}) (see appendix~\ref{sub:lowerboundsDs}). According to lemma~2.2 in \citep{Kleijn21} (with $B_n=\{\theta_n\}$), for any test $\phi:\scrX_n\to[0,1]$, we have,
\[
  P_{\theta_n}\Pi(S_n|X^n)
    \leq P_{\theta_n}\phi(X^n) + \frac{1}{\pi_n(\theta_n)}
      \sum_{\eta_n\in S_n}\pi_n(\eta_n)P_{\eta_n}(1-\phi(X^n)).
\]
Based on lemma~2.7 in \citep{Kleijn21}, lemma~\ref{lem:testingpower} proves that for any $\eta_n\in S_n$ there is a test function $\phi_{\eta_n}$ that distinguishes $\theta_n$ from $\eta_n$ as follows,
\[
  P_{\theta_n}\phi_{\eta_n}(X^n)
    + \frac{\pi_n(\eta_n)}{\pi_n(\theta_n)} P_{\eta_n}(1-\phi_{\eta_n}(X^n))
  \leq \frac{\pi_n(\eta_n)^{1/2}}{\pi_n(\theta_n)^{1/2}}\rho(p_n,q_n)^{d_n},
\]
where the last inequality follows from \(\rho(p_n, q_n)\leq 1\) and the fact that \(|D_{1}(\theta_n,\eta_n)\cup D_{2}(\theta_n,\eta_n)|\geq d_n\), for all \(\eta_n\in S\). Then, using test functions $\phi_{S_n}(X^n)=\max\{\phi_{\eta_n}(X^n):\eta_n\in S_n\}$, we have,
\[
  P_{\theta_n}\phi_{S_n}(X^n)
    \leq\sum_{\eta_n\in S_n}P_{\theta_n}\phi_{\eta_n}(X^n),
\]
so that,
\[
  \begin{split}
  P_{\theta_n}\Pi(&S_n|X^n)\\
    &\leq \sum_{\eta_n\in S_n}P_{\theta_n}\phi_{\eta_n}(X^n)
      + \frac{1}{\pi_n(\theta_n)} \sum_{\eta_n\in S_n}\pi_n(\eta_n)
      P_{\eta_n}\bigl(1-\phi_{S_n}(X^n)\bigr)\\[.5mm]
    &\leq \sum_{\eta_n\in S_n} \Bigl(P_{\theta_n}\phi_{\eta_n}(X^n)
      + \frac{\pi_n(\eta_n)}{\pi_n(\theta_n)}
      P_{\eta_n}\bigl(1-\phi_{\eta_n}(X^n)\bigr)\Bigr)\\
    &\leq \rho(p_n,q_n)^{d_n}\sum_{\eta_n\in S_n}
      \frac{\pi_n(\eta_n)^{1/2}}{\pi_n(\theta_n)^{1/2}}.
  \end{split}
\]
\end{proof}
Note that non-uniform priors $\Pi_n$ (\eg\ sample first a smallest community size $m$ (uniformly, binomially, \etc) and then $\theta_n|m$ (uniformly) from $\Theta_{n,m}$) do not help in inequality~(\ref{eq:ordervstestpwr}): because $\theta_n$ is unknown, the factor $\pi_n(\theta_n)^{-1/2}$ can only be dominated by $\inf\{\pi_n(\theta):\theta\in\Theta_n\}^{-1/2}$. For most priors this leads to exponential factors of the type $\exp(ng)$ with a prior-dependent constant $g>0$ \citep{Waaij21}, while in the uniform case, the upper bound of inequality~(\ref{eq:ordervstestpwr}) matches pointwise testing power $\rho(p_n,q_n)^{d_n}$ strictly versus the cardinal $|S_n|$. As a consequence, all convergence results in the next section are optimal for priors $\Pi_n$ that are uniform on $\Theta_n$, and we do not consider non-uniform priors from this point onward.


\section{Recovery of community assignments}
\label{sec:commrecovery}

When a statistical model has a natural partition into a finite number of submodels (like the size of the smallest community in the current model), the question arises whether it is possible to \emph{first} select one of the sub-models, and then restrict estimation within that sub-model. Such a procedure can lead to significant reduction in complexity of the estimation procedure (and of the computational burden); if model selection can be done consistently, the benefits are often great. So before we commit to recovery of the full community structure, we should explore the possibility of first model-selecting the smallest community size. This analysis has been done in detail and can be found in \citep{Kleijn22}. The answer is that there are no short-cuts: consistent selection of the smallest community size \emph{without} also addressing the estimation question is not feasible in a straightforward manner. Hence, we analyse the question of community recovery without the benefit of consistent model selection for the unknown size of the smallest community. In subsections~\ref{sub:exactrecovery} and~\ref{sub:almostexactrecovery} we discuss posterior concentration on and around the true community assignment vectors $\theta_n$.

\subsection{Exact recovery of the community structure} 
\label{sub:exactrecovery}

\begin{theorem}
\label{thm:exactrecovery}
For fixed $n\geq1$, suppose $X^n$ is generated according to $P_{\theta_{n}}$ with $\theta_{n}\in\Theta_n$ and choose the uniform prior on $\Theta_n$. Then,
\begin{equation}
  \label{eq:exactineq}
  P_{\theta_{n}}\Pi\bigl(\,\{\theta_{n}\}\bigm| X^n\bigr)
  \geq 1- \frac{n}2\rho(p_n,q_n)^{n/2}\,e^{n\rho(p_n,q_n)^{n/2}},
\end{equation}
implying that if,
\begin{equation}
  \label{eq:condforexactrecovery}
  n\rho(p_n,q_n)^{n/2}\to0, 
\end{equation}
then the posterior recovers the true community assignment exactly.
\end{theorem}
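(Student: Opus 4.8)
The plan is to estimate the expected posterior mass of the complement and subtract it from one. Writing
\[
  P_{\theta_n}\Pi\bigl(\{\theta_n\}\mid X^n\bigr)
    = 1 - P_{\theta_n}\Pi\bigl(\Theta_n\setminus\{\theta_n\}\mid X^n\bigr),
\]
I would partition the competitors by their distance to $\theta_n$: for $1\le k\le\floor{n/2}$ set $S_{n,k}=\{\eta_n\in\Theta_n:k(\theta_n,\eta_n)=k\}$, so that the shells $S_{n,k}$ partition $\Theta_n\setminus\{\theta_n\}$. Applying proposition~\ref{prop:postconvset} to each $S_{n,k}$ with the uniform prior (for which $\pi_n(\eta_n)/\pi_n(\theta_n)=1$), and noting that every $\eta_n\in S_{n,k}$ produces the \emph{same} number $d_{n,k}$ of edge-probability changes, gives
\[
  P_{\theta_n}\Pi\bigl(\Theta_n\setminus\{\theta_n\}\mid X^n\bigr)
    \le\sum_{k=1}^{\floor{n/2}}|S_{n,k}|\,\rho(p_n,q_n)^{d_{n,k}}.
\]

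The combinatorial heart is to compute $d_{n,k}$ and $|S_{n,k}|$ exactly. Classifying each vertex $i$ by the pair $(\theta_{n,i},\eta_{n,i})\in\{0,1\}^2$ and counting the discordant pairs in~(\ref{eq:thesetsD}), I expect to find $|D_1(\theta_n,\eta_n)\cup D_2(\theta_n,\eta_n)|=h(n-h)$ with $h=h(\theta_n,\eta_n)$; since $h(n-h)$ is invariant under $h\mapsto n-h$ this equals $k(n-k)$ for $k=k(\theta_n,\eta_n)$, so $d_{n,k}=k(n-k)$. Using the identification of $\theta_n$ with $1-\theta_n$, each pair $\{\eta_n,1-\eta_n\}$ at $k$-distance $k$ contributes exactly one element of $\Theta_n$, whence $|S_{n,k}|=\binom{n}{k}$ for $k<n/2$ and $|S_{n,n/2}|=\tfrac12\binom{n}{n/2}$. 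Writing $\rho=\rho(p_n,q_n)$, the estimate becomes $P_{\theta_n}\Pi(\Theta_n\setminus\{\theta_n\}\mid X^n)\le f(\rho)$ with $f(\rho)=\sum_{k=1}^{\floor{n/2}}|S_{n,k}|\rho^{k(n-k)}$.

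It then remains to show $f(\rho)\le\tfrac n2\rho^{n/2}e^{n\rho^{n/2}}=:g(\rho)$. Because $n-k\ge n/2$ gives $k(n-k)\ge\tfrac{nk}2$ and $\binom nk\le n^k/k!$, every term with $k\ge2$ obeys $|S_{n,k}|\rho^{k(n-k)}\le\frac{n^k}{2(k-1)!}\rho^{kn/2}$ (using $\tfrac1{k!}\le\tfrac1{2(k-1)!}$), and these match the series $g(\rho)=\sum_{k\ge1}\frac{n^k}{2(k-1)!}\rho^{kn/2}$ term by term. The whole difficulty concentrates in the leading $k=1$ term: its exact value is $n\rho^{n-1}$ while the corresponding term of $g$ is $\tfrac n2\rho^{n/2}$, and $n\rho^{n-1}\le\tfrac n2\rho^{n/2}$ holds \emph{exactly} when $\rho^{(n-2)/2}\le\tfrac12$. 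In that range the term-by-term comparison closes the bound. Outside it (i.e. $\rho^{(n-2)/2}>\tfrac12$) one checks $\rho^{n/2}>2^{-n/(n-2)}\ge\tfrac14$, so for $n\ge4$ one has $g(\rho)>\tfrac n8 e^{n/4}>1$ and the asserted lower bound $1-g(\rho)$ is negative, hence vacuously true; the cases $n\le3$ are checked by hand (for $n=3$ the claim reduces to $2\rho^{1/2}\le e^{3\rho^{3/2}}$, which follows from $e^x\ge1+x$). This establishes~(\ref{eq:exactineq}).

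The recovery statement is then immediate: if $n\rho(p_n,q_n)^{n/2}\to0$ then $g(\rho)\to0$, so $P_{\theta_n}\Pi(\{\theta_n\}\mid X^n)\to1$, which is exact recovery in the sense of definition~\ref{def:exact}. I expect the edge-change identity $|D_1\cup D_2|=h(n-h)$ and the shell counts $|S_{n,k}|$ to be the routine-but-essential input, while the genuine obstacle is the control of the $k=1$ term: the clean constant $\tfrac n2$ is compatible with the inequality only once one notices that the estimate is informative precisely in the regime where the naive term-by-term bound already applies, the complementary regime being vacuous.
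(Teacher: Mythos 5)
Your proposal is correct, and it follows the paper's proof skeleton exactly up to the last step: the same shell decomposition (your $S_{n,k}$ are the paper's $V_{n,k}(\theta_n)$), the same application of proposition~\ref{prop:postconvset} with uniform prior, the same identity $|D_1\cup D_2|=k(n-k)$ (equation~(\ref{eq:sizeofD1andD2inVnmk}), proved via the $V_{ab}$ vertex classification in appendix~\ref{sub:lowerboundsDs}), and the same shell counts $\binom nk$ with the factor $\tfrac12$ at $k=n/2$. The only genuine divergence is in how $\sum_{k=1}^{\floor{n/2}}\binom nk\rho^{k(n-k)}$ is bounded by $n\rho^{n/2}e^{n\rho^{n/2}}$. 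The paper (lemma~\ref{lem:boundforbinomialsum}) uses $k(n-k)\geq kn/2$ and then the binomial theorem, $\sum_{k\geq1}\binom nk(\rho^{n/2})^k=(1+\rho^{n/2})^n-1\leq e^{n\rho^{n/2}}-1\leq n\rho^{n/2}e^{n\rho^{n/2}}$, which handles all $k$ uniformly; in particular the $k=1$ shell is dominated by the \emph{full} first binomial term $n\rho^{n/2}\geq n\rho^{n-1}$, with no condition on $\rho$. Your term-by-term comparison against the exponential series $\sum_{k\geq1}\tfrac{n^k}{2(k-1)!}\rho^{kn/2}$ only offers $\tfrac n2\rho^{n/2}$ at $k=1$, half of what is needed in general, which is what forces your case split on $\rho^{(n-2)/2}\lessgtr\tfrac12$, the vacuousness argument for large $\rho$, and the hand checks for $n\leq3$. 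All of that is sound (the vacuous regime really does give $g(\rho)>1$ for $n\geq4$), but it is extra work that the binomial-theorem route avoids entirely; the trade-off is that your version makes explicit the otherwise hidden fact that the bound is informative only where the naive term-by-term comparison already applies.
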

\begin{proof}
For any integer $k\geq0$, define $V_{n,k}(\theta_n)=\set{\eta_n\in\Theta_{n}:k(\theta_n,\eta_n)=k}$. Note that for $k=1,\ldots,\floor{n/2}$, $V_{n,k}$ has at most $\binom{n}{k}$ elements and, when $n$ is even, $V_{n,n/2}$ has at most $1/2\binom{n}{n/2}$ elements. It follows from equation~(\ref{eq:sizeofD1andD2inVnmk}) that for all $\eta_n\in V_{n,k}$, $|D_1(\theta_n,\eta_n)\cup D_2(\theta_n,\eta_n)|=k(n-k)$. Then proposition~\ref{prop:postconvset} (with uniform prior) says that,
\[
  \begin{split}
    P_{\theta_{0,n}}\Pi(&\Theta_{n}\setminus \set{\theta_{0,n}}|X^n)
      =\sum_{k=1}^{\floor{n/2}} P_{\theta_{0,n}}\Pi(V_{n,k}(\theta)\mid X^n)\\
  &\leq \frac12\sum_{k=1}^{n}\binom{n}{k}\rho(p_n,q_n)^{k(n-k)}
    \leq \frac{n}2\rho(p_n,q_n)^{n/2}\,e^{n\rho(p_n,q_n)^{n/2}},
  \end{split}
\]
where we use lemma~\ref{lem:boundforbinomialsum} for the second bound.
\end{proof}
In the following corollary, we explore the condition of theorem~\ref{thm:exactrecovery} more closely in the Chernoff-Hellinger phase.
\begin{corollary}
\label{cor:mnscritical}
Assume the conditions of theorem~\ref{thm:exactrecovery}. If the sequences $a_n,b_n$ in the Chernoff-Hellinger phase satisfy,
\begin{equation}
  \label{eq:newcritical}
  \Bigl((\sqrt{a_n}-\sqrt{b_n})^2-\frac{a_nb_n\log(n)}{2n}-4\Bigr)\log(n)\to \infty,
\end{equation}
then the posterior recovers the community assignments exactly.
\end{corollary}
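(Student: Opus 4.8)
The plan is to show that condition~(\ref{eq:newcritical}) implies condition~(\ref{eq:condforexactrecovery}), after which exact recovery is immediate from theorem~\ref{thm:exactrecovery}. Thus the whole argument is an asymptotic analysis of the Hellinger affinity $\rho(p_n,q_n)$ in the Chernoff-Hellinger phase $p_n=a_n\log(n)/n$, $q_n=b_n\log(n)/n$, with the single goal of proving $n\rho(p_n,q_n)^{n/2}\to0$.

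First I would pass to the exponent. Writing $n\rho(p_n,q_n)^{n/2}=\exp\bigl(\log n+\tfrac n2\log\rho(p_n,q_n)\bigr)$, it suffices to show $\log n+\tfrac n2\log\rho(p_n,q_n)\to-\infty$. Applying the elementary inequality $\log x\le x-1$ gives $\log\rho\le\rho-1=-(1-\rho)$, so it is enough to prove $\tfrac n2\bigl(1-\rho(p_n,q_n)\bigr)-\log n\to\infty$. This reduces the problem to a single lower bound on the affinity deficiency $1-\rho(p_n,q_n)$.

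The core step is a finite-$n$ lower bound of the form $1-\rho(p_n,q_n)\ge\tfrac12(\sqrt{p_n}-\sqrt{q_n})^2-\tfrac14p_nq_n$. To obtain it, I would start from definition~(\ref{eq:AffBernoulli}), $\rho=\sqrt{p_nq_n}+\sqrt{(1-p_n)(1-q_n)}$, and bound the second summand using $\sqrt{1-x}\le1-\tfrac x2-\tfrac{x^2}8$ (valid on $[0,1]$, since all further Taylor coefficients are negative) applied separately to $\sqrt{1-p_n}$ and $\sqrt{1-q_n}$. Multiplying the two upper bounds produces the cross term $+\tfrac14p_nq_n$ together with a net-nonnegative collection of higher-order terms $\tfrac18(p_n^2+q_n^2)+\cdots$; discarding the latter (legitimate for large $n$, where $p_n,q_n\to0$) and recombining $\tfrac12(p_n+q_n)-\sqrt{p_nq_n}=\tfrac12(\sqrt{p_n}-\sqrt{q_n})^2$ yields the claimed bound. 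Substituting the phase values then gives $\tfrac n2(1-\rho)\ge\tfrac14(\sqrt{a_n}-\sqrt{b_n})^2\log n-\tfrac{a_nb_n(\log n)^2}{8n}$, whence $\tfrac n2(1-\rho)-\log n\ge\tfrac{\log n}4\bigl((\sqrt{a_n}-\sqrt{b_n})^2-\tfrac{a_nb_n\log n}{2n}-4\bigr)$, which diverges precisely under~(\ref{eq:newcritical}).

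The main obstacle is calibrating the correction constant so that the surviving second-order contribution is exactly $-\tfrac14p_nq_n$: the exact Hellinger identity would give no correction at all, while a careless expansion leaves a stray $(a_n+b_n)^2$ term, so the bound must be engineered to match the bracket in~(\ref{eq:newcritical}) on the nose. Two routine checks support this: the discarded remainder is genuinely nonnegative for all large $n$ (using $p_n^2+q_n^2\ge2p_nq_n$ and $p_n,q_n\to0$), and the hypothesis $a_n,b_n=O(1)$ keeps every discarded term of strictly smaller order than $\log n$, so that the divergence is controlled solely by the bracketed expression.
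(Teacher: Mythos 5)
Your proof is correct and follows essentially the same route as the paper's: it establishes the affinity bound $\rho(p_n,q_n)\le 1-\tfrac12(\sqrt{p_n}-\sqrt{q_n})^2+\tfrac14 p_nq_n$ and then exponentiates via $\log x\le x-1$, which is the paper's lemma $(1+x/r)^r\le e^x$ in disguise. The only cosmetic difference is that the paper obtains the same bound directly from $\sqrt{1-x}\le 1-x/2$, so your second-order correction $-x^2/8$ and the attendant remainder bookkeeping are unnecessary (though harmless).
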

\begin{proof}
Since for all $x\in[0,1]$, $\sqrt{1-x}\leq 1-x/2$,
\[
  \begin{split}
  \rho(p_n,q_n) &\leq \sqrt{p_nq_n} + (1-p_n/2)(1-q_n/2)
  = 1  - \ft12(\sqrt{p_n}-\sqrt{q_n})^2 + \ft14p_nq_n \\ 
  &= 1 - \frac1n\Bigl(\ft12(\sqrt{a_n}-\sqrt{b_n})^2\log n
    - \frac{a_nb_n}{4n}(\log n)^2\Bigr).
  \end{split}
\]
It follows that, 
\[
\begin{split}
  n\rho(p_n,q_n)^{n/2}
  \leq \exp\Bigl(\bigl(1-\ft14(\sqrt{a_n}-\sqrt{b_n})^2\bigr)\log n
    + \frac{a_nb_n}{8n}(\log n)^2\Bigr),
  \end{split}
\]
from lemma~\ref{lem:oneplusxdivrtothepowerrissmallerthanetothepowerx}.
\end{proof}
Note that condition (\ref{eq:newcritical}) resembles (but is not exactly equal to) (\ref{eq:mnscritical}), the requirement of \citep{Mossel16}, which applies only if there exists a constant $C>0$ such that $C^{-1}\leq a_n, b_n \leq C$ for large enough $n$ \citep{Mossel16,Zhang16}. For $a_n,b_n$ of order $O(1)$, a simple sufficient conditions for exact recovery is,
\begin{equation}
  \label{eq:ourMNSdetect}
  \bigl((\sqrt{a_n}-\sqrt{b_n})^2-4\bigr)\log n\to\infty,
\end{equation}
which does not require that $a_n,b_n$ stay bounded away from $0$. Note: if we disregard the (negligible) term proportional to $\log(\log(n))$ in (\ref{eq:mnscritical}), there is a relative factor two between the lower-bounding constants of conditions~(\ref{eq:ourMNSdetect}) and~(\ref{eq:mnscritical}) (possibly a manifestation of the fact that the smallest community size is not half of $n$ but unknown).

\begin{example}
\label{ex:qniszero}
Note that exact recovery of the community structure is not possible in the Kesten-Stigum phase. This can be understood intuitively on the basis of the special case where $q_n=0$: if $p_n$ is of order $O(n^{-1}\log(n))$, the two communities form as Erd\H os-R\'enyi graphs that are connected with a probability that goes to one as $n\to\infty$ \citep{hofstad16}, making exact recovery asymptotically trivial. If $q_n=0$ and $p_n\geq Cn^{-1}$ for some $C>1$, the two communities form as Erd\H os-R\'enyi graphs with two independent \emph{giant components} containing some non-zero fraction of all vertices asymptotically, but fragments of $O(\log(n))$ vertices remain unconnected to either \citep{hofstad16}. Consequently in the Kesten-Stigum phase exact recovery is not possible, even in the setting where $q_n=0$. The above suggests that this break-down persists in case where the edge probabilities $q_n$ are non-zero.
\end{example}

\subsection{Almost-exact recovery of the community structure} 
\label{sub:almostexactrecovery}

For block models with even higher degrees of edge sparsity, we consider the condition for almost exact recovery with posteriors. Let $(k_n)$ be a sequence with $0\leq k_n\leq\floor{n/2}$, let $\theta_n$ be community assignments in $\theta_n$. Define the (Hamming-)metric balls,
\begin{equation}
  \label{eq:definitionBkn}
  B_n(\theta_n,k_n)=\bigl\{\eta_n\in\Theta_n:k(\eta_n,\theta_n)\leq k_n\},
\end{equation}
based on definition~(\ref{eq:kmetric}). Metric balls of this type contain $\theta_n$ and all community assignments that differ by no more than $k_n$ vertices from $\theta_n$. If the posterior concentrates in the balls $B_n(\theta_n,k_n)$ with high probability, then we estimate the community assignment correctly up to subsets of vertices of order $O(k_n)$ with high probability. For instance in example~\ref{ex:qniszero}, communities manifest as giant components with unconnected fragments of order $O(\log(n))=o(n)$, so we could take $k_n$ proportional to $n$. In such cases, \emph{almost-exact recovery} (definition~\ref{def:detect}) is appropriate, and the following theorem describes the condition on edge sparsity and error rate $k_n$ that enables almost-exact recovery with posterior distributions.
\begin{theorem}
\label{thm:almostexactrecovery}
For fixed $n\geq1$, suppose $X^n$ is generated according to $P_{\theta_n}$ with $\theta_n\in\Theta_n$ and choose the uniform prior on $\Theta_n$. For some sequence $a_n$ with $0<a_n<1/2$, let $k_n$ be an integer such that $k_n\geq a_nn$. Then the expected posterior probability of $B_n(\theta_n,k_n)$ is lower bounded as follows,
\begin{equation}
  \label{eq:postconcKS}
  P_{\theta_n}\Pi\bigl(B_n(\theta_n,k_n)\bigm| X^n\bigr)
  \geq 1-\frac12\Bigl(\ft{e}{a_n}\rho(p_n,q_n)^{n/2}\Bigr)^{a_nn}
      \Bigl(1-\ft{e}{a_n}\rho(p_n,q_n)^{n/2}\Bigr)^{-1}.
\end{equation}
\end{theorem}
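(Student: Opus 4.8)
The plan is to follow the template of the proof of Theorem~\ref{thm:exactrecovery}, but to bound the posterior mass of the complement of the Hamming ball rather than of a single point. Writing
\[
  P_{\theta_n}\Pi\bigl(B_n(\theta_n,k_n)\bigm|X^n\bigr)
    =1-P_{\theta_n}\Pi\bigl(\Theta_n\setminus B_n(\theta_n,k_n)\bigm|X^n\bigr),
\]
I would decompose the complement into the spheres $V_{n,k}(\theta_n)=\{\eta_n\in\Theta_n:k(\theta_n,\eta_n)=k\}$ already used in Theorem~\ref{thm:exactrecovery}. Since $k(\theta_n,\cdot)$ is integer-valued and $B_n(\theta_n,k_n)=\{k\le k_n\}$, the complement is $\bigcup_{k=k_n+1}^{\floor{n/2}}V_{n,k}(\theta_n)$, so that $P_{\theta_n}\Pi(\Theta_n\setminus B_n(\theta_n,k_n)\mid X^n)=\sum_{k=k_n+1}^{\floor{n/2}}P_{\theta_n}\Pi(V_{n,k}(\theta_n)\mid X^n)$.

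Each summand is controlled by Proposition~\ref{prop:postconvset} with $S_n=V_{n,k}(\theta_n)$. Under the uniform prior the factors $\sqrt{\pi_n(\eta_n)/\pi_n(\theta_n)}$ are all equal to $1$, and by~(\ref{eq:sizeofD1andD2inVnmk}) every $\eta_n\in V_{n,k}$ satisfies $|D_1(\theta_n,\eta_n)\cup D_2(\theta_n,\eta_n)|=k(n-k)$, whence $d_n=k(n-k)$. Combined with the sphere-size bound $|V_{n,k}|\le\binom nk$, this yields
\[
  P_{\theta_n}\Pi\bigl(V_{n,k}(\theta_n)\bigm|X^n\bigr)
    \le\binom nk\rho(p_n,q_n)^{k(n-k)}.
\]

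The crux is to convert the resulting sum into a convergent geometric series. Every index in the sum obeys $a_nn<k\le\floor{n/2}$, and on this range two estimates hold at once: from $n-k\ge n/2$ and $\rho\le1$ one gets $\rho^{k(n-k)}\le(\rho^{n/2})^{k}$, while from $k\ge a_nn$ the bound $\binom nk\le(en/k)^{k}\le(e/a_n)^{k}$ holds uniformly in $k$. Hence each term is dominated by $r^{k}$ with $r:=(e/a_n)\,\rho(p_n,q_n)^{n/2}$, and summing the geometric tail from $k_n+1\ge a_nn$ (which converges exactly when $r<1$, the regime in which the claimed bound is non-vacuous) produces a bound of the form $c\,r^{a_nn}(1-r)^{-1}$; pinning down the precise constant $\tfrac12$ is then a bookkeeping matter, handled by the binomial-sum estimate in the same way lemma~\ref{lem:boundforbinomialsum} is invoked in Theorem~\ref{thm:exactrecovery}.

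I expect the one genuinely delicate point to be the \emph{uniformity} of the two estimates over the entire summation range: the inequality $\binom nk\le(e/a_n)^k$ is valid for every $k$ in the sum only because $k\ge k_n\ge a_nn$ forces $en/k\le e/a_n$, and it must be paired with $k\le\floor{n/2}$, which guarantees $n-k\ge n/2$ and hence $\rho^{k(n-k)}\le(\rho^{n/2})^k$. The hypothesis $a_n<1/2$ is exactly what keeps the lower limit $a_nn$ strictly below the upper limit $\floor{n/2}$, so that both regimes are simultaneously available; once this is secured the geometric summation and the tracking of $(1-r)^{-1}$ are routine.
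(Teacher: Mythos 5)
Your proposal follows the paper's proof essentially verbatim: decomposition of the complement into the spheres $V_{n,k}(\theta_n)$, application of Proposition~\ref{prop:postconvset} with $d_n=k(n-k)$ under the uniform prior, the two uniform bounds $\binom nk\le(e/a_n)^k$ and $\rho(p_n,q_n)^{k(n-k)}\le\rho(p_n,q_n)^{kn/2}$ on the range $a_nn<k\le\floor{n/2}$, and geometric summation of the tail starting at $k_n\ge a_nn$. The only detail you defer --- the factor $\tfrac12$ --- indeed comes from the same sphere-counting symmetry already used in Theorem~\ref{thm:exactrecovery}, so nothing essential is missing.
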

\begin{proof}
By proposition~\ref{prop:postconvset} (and using the sets $V_{n,k}(\theta_n)$ of the proof of theorem~\ref{eq:condforexactrecovery}), when $k_n\geq a_n n$, we see that,
\begin{equation}
  \label{eq:upperboundkerstenstigumphase}
  \begin{split}
    P_{\theta_n}&\Pi\bigl(\Theta_{n}\setminus B_n(\theta_n,k_n)\bigm| X^n\bigr)
     =\sum_{k=k_n+1}^{\floor{n/2}}P_{\theta_{0,n}}
       \Pi\bigl(V_{n,k}(\theta_n)\bigm| X^n\bigr)\\
    &\leq \frac12\sum_{k=k_n}^{n}\binom nk \rho(p_n,q_n)^{kn/2}
    \leq \frac12\sum_{k=k_n}^{n} \Bigl(\frac{en}k\Bigr)^k \rho(p_n,q_n)^{kn/2}\\
    &\leq \frac12\sum_{k=k_n}^{\infty} \Bigl(\frac{e}{a_n}\Bigr)^k
      \rho(p_n,q_n)^{kn/2}\\
    &\leq \frac12\Bigl(\ft{e}{a_n}\rho(p_n,q_n)^{n/2}\Bigr)^{a_nn}
      \Bigl(1-\ft{e}{a_n}\rho(p_n,q_n)^{n/2}\Bigr)^{-1},
  \end{split}
\end{equation}
proving the assertion.
\end{proof}
Almost exact recovery is established when $P_{\theta_n}\Pi(\,\Theta_{n}\setminus B_n(\theta_n,k_n)| X^n) $ converges to zero (possibly while $a_n\downarrow0$). As in example~\ref{ex:qniszero} almost-exact recovery is especially relevant in the Kesten-Stigum phase, which we consider separately in the following proposition.
\begin{proposition}
\label{prop:KSalmostexact}
Assume the conditions of theorem~\ref{thm:almostexactrecovery}. If the sequences $c_n,d_n$ in the Kesten-Stigum phase and the fractions $a_n$ satisfy,
\begin{equation}
  \label{eq:almostexactexponent}
  a_nn\Bigl(\log(a_n)+\frac14\bigl(\sqrt{c_n}
    -\sqrt{d_n}\bigr)^2-1\Bigr)\to\infty
\end{equation}
then posteriors recover the community assignment almost-exactly with any error rate $k_n\geq a_nn$.
\end{proposition}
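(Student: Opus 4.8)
The plan is to read the conclusion directly off the lower bound (\ref{eq:postconcKS}) of Theorem~\ref{thm:almostexactrecovery}. Almost-exact recovery with error rate $k_n\geq a_nn$ means (Definition~\ref{def:detect}) that $P_{\theta_n}\Pi(B_n(\theta_n,k_n)\mid X^n)\to 1$, so by (\ref{eq:postconcKS}) it suffices to show that the complementary term
\[
  \frac12\Bigl(\tfrac{e}{a_n}\rho(p_n,q_n)^{n/2}\Bigr)^{a_nn}\Bigl(1-\tfrac{e}{a_n}\rho(p_n,q_n)^{n/2}\Bigr)^{-1}
\]
tends to zero. Writing $r_n=\tfrac{e}{a_n}\rho(p_n,q_n)^{n/2}$, the whole task reduces to translating condition (\ref{eq:almostexactexponent}) into the statement $r_n^{a_nn}\to0$, together with control of the geometric prefactor $(1-r_n)^{-1}$.

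First I would obtain a clean upper bound on $\rho(p_n,q_n)^{n/2}$ in the Kesten-Stigum phase $p_n=c_n/n$, $q_n=d_n/n$. The key is the elementary identity
\[
  1-\rho(p,q)=\tfrac12\bigl[(\sqrt{p}-\sqrt{q})^2+(\sqrt{1-p}-\sqrt{1-q})^2\bigr],
\]
which follows by expanding the squares against (\ref{eq:AffBernoulli}). Discarding the (nonnegative) second term and substituting $p_n,q_n$ gives $1-\rho(p_n,q_n)\geq\tfrac1{2n}(\sqrt{c_n}-\sqrt{d_n})^2$. Combining this with $\log x\leq x-1$ yields
\[
  \tfrac{n}{2}\log\rho(p_n,q_n)\leq-\tfrac{n}{2}\bigl(1-\rho(p_n,q_n)\bigr)\leq-\tfrac14(\sqrt{c_n}-\sqrt{d_n})^2,
\]
so that $\rho(p_n,q_n)^{n/2}\leq\exp(-\tfrac14(\sqrt{c_n}-\sqrt{d_n})^2)$. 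This is an \emph{exact} inequality rather than an asymptotic approximation, which is what makes the argument robust across the whole $O(1)$ (or $o(\log n)$) range of $c_n,d_n$.

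With this in hand I would take logarithms of $r_n^{a_nn}$,
\[
  \log r_n^{a_nn}=a_nn\Bigl(1-\log a_n+\tfrac{n}{2}\log\rho(p_n,q_n)\Bigr)\leq-a_nn\Bigl(\log a_n+\tfrac14(\sqrt{c_n}-\sqrt{d_n})^2-1\Bigr),
\]
whose right-hand side is exactly the negative of the expression in (\ref{eq:almostexactexponent}). Hence condition (\ref{eq:almostexactexponent}) forces $\log r_n^{a_nn}\to-\infty$, i.e.\ $r_n^{a_nn}\to0$. Since the same bound shows $\log r_n\leq-(\log a_n+\tfrac14(\sqrt{c_n}-\sqrt{d_n})^2-1)$ and the bracket is eventually positive (its product with $a_nn>0$ tends to $+\infty$), we get $r_n<1$ for large $n$, so the prefactor $(1-r_n)^{-1}$ is finite and stays bounded whenever $r_n$ is bounded away from $1$; the product then tends to zero, giving recovery. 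The main obstacle I anticipate is precisely this last point: verifying that the geometric prefactor does not undo the exponential decay of $r_n^{a_nn}$ in the regime $a_n\downarrow0$, where $r_n$ may approach $1$, so that one must check that the divergence $a_nn(\,\cdots)\to\infty$ dominates the at-most-polynomial growth of $(1-r_n)^{-1}$. The clean, non-asymptotic bound on $\rho(p_n,q_n)^{n/2}$ is the tool that keeps this controllable.
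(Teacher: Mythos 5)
Your argument is essentially the paper's: both reduce the claim to showing that the right-hand side of (\ref{eq:postconcKS}) tends to one, i.e.\ that $r_n^{a_nn}(1-r_n)^{-1}\to0$ for $r_n=\tfrac{e}{a_n}\rho(p_n,q_n)^{n/2}$, and both do so by bounding $\rho(p_n,q_n)^{n/2}$ by $\exp(-\tfrac14(\sqrt{c_n}-\sqrt{d_n})^2)$ up to corrections, after which the exponent of $r_n^{a_nn}$ is exactly the negative of (\ref{eq:almostexactexponent}). Your derivation of the affinity bound (the exact identity $1-\rho(p,q)=\tfrac12[(\sqrt{p}-\sqrt{q})^2+(\sqrt{1-p}-\sqrt{1-q})^2]$ together with $\log x\le x-1$) is marginally cleaner than the paper's ($\sqrt{1-x}\le 1-x/2$ plus lemma~\ref{lem:oneplusxdivrtothepowerrissmallerthanetothepowerx}), since it avoids the residual $c_nd_n/(8n)$ term that the paper must separately argue away using $c_n,d_n=o(\log n)$. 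The step you flag as unfinished --- controlling $(1-r_n)^{-1}$ when $r_n$ approaches $1$ --- is treated no more explicitly in the paper, which asserts convergence directly from the displayed condition, so you are not missing anything the paper supplies; for completeness one can add that with $L_n=\log a_n+\tfrac14(\sqrt{c_n}-\sqrt{d_n})^2-1$ one has $1-r_n\ge 1-e^{-L_n}\ge(1-e^{-1})\min(L_n,1)$, which closes the gap except in the extreme regime where $a_nnL_n\to\infty$ more slowly than $\log(a_nn)$ while $L_n\to0$.
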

\begin{proof}
Again using that for all $x\in[0,1]$, $\sqrt{1-x}\leq 1-x/2$, we find,
\[
  \rho(p_n,q_n) 
  \leq \frac{\sqrt{c_nd_n}}{n}
    +\Bigl(1-\frac{c_n}n\Bigr)\Bigl(1-\frac{d_n}n\Bigr)
  \leq 1-\frac{\bigl(\sqrt{c_n}-\sqrt{d_n}\bigr)^2}{2n}+\frac{c_nd_n}{4n^2},
\]
and using lemma~\ref{lem:oneplusxdivrtothepowerrissmallerthanetothepowerx},
\[
  \frac{e}{a_n}\rho(p_n,q_n)^{n/2}
  \leq \exp\Bigl(1-\log(a_n)
    -\frac{\bigl(\sqrt{c_n}-\sqrt{d_n}\bigr)^2}{4}+\frac{c_nd_n}{8n}\Bigr).
\]
Based on (\ref{eq:upperboundkerstenstigumphase}), we arrive at posterior concentration in the sets $B_n(\theta_n,k_n)$ if,
\[
a_nn\Bigl(\log(a_n)+\frac14\bigl(\sqrt{c_n}
    -\sqrt{d_n}\bigr)^2-\frac1{8n}c_nd_n-1\Bigr)\to\infty.
\]
Since $c_n,d_n$ are of order $o(\log(n))$, the third term is negligible and we conclude that posterior concentration occurs whenever (\ref{eq:almostexactexponent}) holds.
\end{proof}
Let us illustrate how requirement (\ref{eq:almostexactexponent}) relates to condition~(\ref{eq:MNSdetect}) and the criteria of \citep{Decelle11a,Decelle11b}. In sparse situations where $p_n,q_n=o(1)$, we can expand the function $p\mapsto\sqrt{p}$ around the value $\ft12(p_n+q_n)$, for every $n\geq1$, to obtain,
\[
  \sqrt{p_n}-\sqrt{q_n} =
  \frac1{2\sqrt{\frac12(p_n+q_n)}}(p_n-q_n)+O(|p_n-q_n|^2).
\]
which implies that,
\[
  \bigl(\sqrt{c_n}-\sqrt{d_n}\bigr)^2
  =\frac{(c_n-d_n)^2}{2(c_n+d_n)} + O(n^{-1}),
\]
in terms of the sequences $(c_n)$, $(d_n)$. This means that $(\sqrt{c_n}-\sqrt{d_n})^2\to\infty$ is equivalent to equation~(\ref{eq:MNSdetect}). Based on that observation, we discuss the consequences of proposition~\ref{prop:KSalmostexact} in several specific corollaries.

In case we allow for error rates $k_n=a_nn$ that leave a non-zero fraction of mis-assigned vertices in the limit ($0<a=\liminf_na_n<1/2$), we find the following simple sufficient condition of the form of condition~(\ref{eq:decellescondition}), conjectured by \cite{Decelle11a,Decelle11b}:
\begin{corollary}
\label{cor:decellescondition}
Assume the conditions of theorem~\ref{thm:almostexactrecovery}, and let $0<a<1/2$ be given. If, for some constant $C>1$ and large enough $n$,
\begin{equation}
  \label{eq:weakestconditionforalmostexactrecovery}
  \bigl(\sqrt{c_n}-\sqrt{d_n}\bigr)^2 > 4C\bigl(1-\log(a)\bigr),
\end{equation}
then the posterior recovers the true community assignment almost exactly with error rate $k_n=an$. \end{corollary} Comparing condition~(\ref{eq:weakestconditionforalmostexactrecovery}) with condition~(\ref{eq:decellescondition}), a relative factor four appears in the lower bound due to the unknown smallest community size, as well as a $\log(a)$-proportional correction term that raises the lower-bounding constant further. Condition~(\ref{eq:MNSdetect}) implies (\ref{eq:weakestconditionforalmostexactrecovery}) but not the other way around. Indeed, according to (\ref{eq:almostexactexponent}) above, condition (\ref{eq:MNSdetect}) is sufficient for almost exact posterior recovery with \emph{any} fixed rate $k_n=an$, $0<a<1/2$, which implies what is called \emph{weak consistency} in \cite{Mossel16}.
\begin{corollary}
\label{cor:MNSdetect}
Assume the conditions of theorem~\ref{thm:almostexactrecovery}. If condition~(\ref{eq:MNSdetect}) holds, the posterior recovers the true community assignment almost exactly with error rate $k_n=a_nn$ for \emph{some vanishing fraction} $a_n\to0$.
\end{corollary}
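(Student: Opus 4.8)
The plan is to reduce everything to Proposition~\ref{prop:KSalmostexact} by exhibiting an explicit vanishing fraction $a_n$ for which the exponent condition (\ref{eq:almostexactexponent}) holds. Write $\gamma_n=(\sqrt{c_n}-\sqrt{d_n})^2$; by the expansion recorded before Corollary~\ref{cor:decellescondition}, hypothesis (\ref{eq:MNSdetect}) is equivalent to $\gamma_n\to\infty$, so I may work with $\gamma_n$ throughout. The task then becomes to find $a_n\downarrow0$ with
\[
  a_nn\Bigl(\log(a_n)+\tfrac14\gamma_n-1\Bigr)\to\infty,
\]
after which Proposition~\ref{prop:KSalmostexact} delivers almost-exact recovery with error rate $k_n=\ceiling{a_nn}$, and $k_n/n\to0$ supplies the vanishing fraction.

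First I would record the size constraint that the Kesten--Stigum phase imposes. Since $c_n,d_n=o(\log n)$, one has $\gamma_n=c_n+d_n-2\sqrt{c_nd_n}\leq c_n+d_n=o(\log n)$, so $\gamma_n$ diverges but strictly slower than $\log n$; this is the structural fact that makes the construction possible. I would then simply take $a_n=\gamma_n^{-1}$, which is legitimate for $n$ large enough that $\gamma_n>2$ (so that $0<a_n<1/2$), and for which $a_n\to0$ is immediate.

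The verification splits across the two factors. For the bracket, $\log(a_n)+\tfrac14\gamma_n-1=\tfrac14\gamma_n-\log(\gamma_n)-1\to\infty$, since the linear term in $\gamma_n$ dominates its logarithm. For the prefactor, $a_nn=n/\gamma_n\geq n/\log n\to\infty$ for large $n$, precisely because $\gamma_n=o(\log n)$. Multiplying the two divergent factors yields the displayed limit, so (\ref{eq:almostexactexponent}) is satisfied and the conclusion follows from Proposition~\ref{prop:KSalmostexact}.

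The one delicate point — the main obstacle, such as it is — is that $\log(a_n)$ enters with a negative sign and blows up as $a_n\downarrow0$, so $a_n$ cannot be driven to zero too fast: a polynomial rate such as $a_n=n^{-\delta}$ would make $\log(a_n)=-\delta\log n$ swamp $\tfrac14\gamma_n=o(\log n)$ and send the bracket to $-\infty$. The resolution is exactly the observation that $\gamma_n=o(\log n)$, which both forces any admissible $a_n$ to decay only subpolynomially and simultaneously guarantees that $a_nn=n/\gamma_n$ still diverges; the choice $a_n=\gamma_n^{-1}$ balances these two competing requirements automatically.
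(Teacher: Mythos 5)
Your proof is correct and follows the route the paper intends: the corollary is stated without an explicit proof, as an immediate consequence of proposition~\ref{prop:KSalmostexact} applied with an $a_n$ vanishing slowly enough, and your choice $a_n=(\sqrt{c_n}-\sqrt{d_n})^{-2}$ simply makes that sequence explicit. The verification of both factors in condition~(\ref{eq:almostexactexponent}) (using $\gamma_n=o(\log n)$ for the prefactor and linear-beats-logarithm for the bracket) is sound, and your closing remark about why polynomially decaying $a_n$ would fail correctly identifies the only constraint in play.
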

In cases where $\liminf_na_n=0$, the rate at which $a_n$ decreases to zero is to be compensated in (\ref{eq:almostexactexponent}) by faster divergence of the limit (\ref{eq:MNSdetect}).
\begin{corollary}
Assume the conditions of theorem~\ref{thm:almostexactrecovery} and let $0<a_n<1/2$ be given, such that $a_n\to0$, $a_nn\to\infty$. If, for some constant $C>1$ and large enough $n$,
\begin{equation}
  \label{eq:edgessparsityandvanishingfraction}
  (\sqrt{c_n}-\sqrt{d_n})^2 + 4C\log(a_n)\to\infty,
\end{equation}
then the posterior recovers the community assignments almost exactly with error rate $k_n=a_nn$.
\end{corollary}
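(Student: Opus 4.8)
The plan is to derive this corollary directly from Proposition~\ref{prop:KSalmostexact}, so that the only thing I need to establish is that hypothesis (\ref{eq:edgessparsityandvanishingfraction}), together with the standing assumptions $a_n\to0$ and $a_nn\to\infty$, forces the exponent condition (\ref{eq:almostexactexponent}). Once (\ref{eq:almostexactexponent}) is verified, Proposition~\ref{prop:KSalmostexact} supplies almost-exact recovery with any error rate $k_n\geq a_nn$, and in particular with $k_n=a_nn$. All of the genuine analysis (the bound on $\rho(p_n,q_n)$ and the summation of the geometric series) already lives inside Proposition~\ref{prop:KSalmostexact}; here the work is a short algebraic reduction.

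First I would exploit the sign of $\log(a_n)$. Since $a_n\to0$ we have $\log(a_n)<0$ for large $n$, and because $C>1$ the term $4C\log(a_n)$ is at least as negative as $4\log(a_n)$; hence
\[
  \bigl(\sqrt{c_n}-\sqrt{d_n}\bigr)^2+4\log(a_n)
  \geq \bigl(\sqrt{c_n}-\sqrt{d_n}\bigr)^2+4C\log(a_n)\to\infty
\]
by hypothesis (\ref{eq:edgessparsityandvanishingfraction}). Dividing by $4$ and subtracting $1$, this gives
\[
  \log(a_n)+\frac14\bigl(\sqrt{c_n}-\sqrt{d_n}\bigr)^2-1\to\infty,
\]
so in particular the bracketed factor of (\ref{eq:almostexactexponent}) is strictly positive for all large $n$.

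It then remains to multiply this divergent, eventually-positive bracket by $a_nn$, which tends to $+\infty$ by assumption; the product therefore diverges,
\[
  a_nn\Bigl(\log(a_n)+\frac14\bigl(\sqrt{c_n}-\sqrt{d_n}\bigr)^2-1\Bigr)\to\infty,
\]
which is exactly (\ref{eq:almostexactexponent}). Invoking Proposition~\ref{prop:KSalmostexact} completes the proof.

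I do not anticipate a genuine obstacle: the statement is essentially a corollary of Proposition~\ref{prop:KSalmostexact} obtained by a monotonicity comparison. The one point worth stating carefully is the sign bookkeeping, namely that replacing the coefficient $4C$ by $4$ only weakens the hypothesis because $\log(a_n)<0$, which is what lets me pass from (\ref{eq:edgessparsityandvanishingfraction}) to the bracket of (\ref{eq:almostexactexponent}). This parallels the use of $C>1$ in corollary~\ref{cor:decellescondition}, the difference being that here the divergence $\log(a_n)\to-\infty$ does the heavy lifting in making the bracket positive, whereas there a fixed $a$ forces reliance on the margin $C-1$.
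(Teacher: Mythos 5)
Your proof is correct and is precisely the derivation the paper intends (the corollary is stated without proof as a consequence of proposition~\ref{prop:KSalmostexact}): you verify condition~(\ref{eq:almostexactexponent}) by using $\log(a_n)<0$ to drop the coefficient from $4C$ to $4$, absorb the constant $-1$ into the divergence, and multiply by $a_nn\to\infty$. Your closing observation about the differing roles of $C>1$ here versus in corollary~\ref{cor:decellescondition} is also accurate; in fact your argument shows the bracket itself diverges, which additionally keeps the geometric factor $\bigl(1-\ft{e}{a_n}\rho(p_n,q_n)^{n/2}\bigr)^{-1}$ bounded.
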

\begin{example}
\label{ex:lognerrorrate}
For an extreme example of the latter kind, consider error rates of order $O(\log(n))$, \eg\ with fractions $a_n$ of order $O(\log(n)/n)$, condition~(\ref{eq:edgessparsityandvanishingfraction}) reads,
\[
  (\sqrt{c_n}-\sqrt{d_n})^2-4C\log(n)\to\infty,
\]
(up to a $\log(\log(n))$-term) for some constant $C>1$ and large enough $n$, forcing edge sparsity up to the $\log(n)/n$-level that characterizes the Chernoff-Hellinger phase. Comparison with condition~(\ref{eq:ourMNSdetect}) then leads us to conclude that in any situation where almost-exact recovery with error rates as small as $O(\log(n))$ is possible, the posterior recovers the true community assignment exactly. This is possibly related to the fact that fragments unconnected to the giant component in the Erd\H os-R\'enyi graph, are at most of order $O(\log(n))$ with high probability (see \citep{hofstad16} and example~\ref{ex:qniszero}).
\end{example}

\section{Uncertainty quantification}
\label{sec:pbmuncertainty}

As said in the introduction, approximation or simulation of a posterior distribution is computationally costly, and if the statistical goal is only the estimation of the community assignment, more efficient algorithms are known, also under edge sparsity (see \citep{Abbe18} for an overview). When more complex statistical questions like uncertainty quantification and hypothesis testing are the goal, sampling distributions for said algorithms are required and those are often prohibitively hard to obtain. In this section we show that enlargement of Bayesian credible sets offers a viable alternative, with finite amounts of data. Enlargements of credible sets also feature centrally in asymptotic conversion of credible sets to confidence sets as in \citep{Kleijn21}.

Let us first fix the relevant definitions. Bayesian uncertainty quantification relies on the notion of credibility.
\begin{definition}
Given $n\geq1$, a prior $\Pi_n$, $0\leq\gamma<1$ and data $X^n$, a \emph{credible set} of \emph{credible level} $1-\gamma$ is any subset $D(X^n)\subset\Theta_n$ that receives posterior mass at least $1-\gamma$:
\[
  \Pi\bigl( D(X^n)\bigm|X^n\bigr)\geq 1-\gamma,
\]
$P^{\Pi_n}$-almost-surely (see definitions~\ref{def:priorpredictive} and~\ref{def:posterior}). In case $\gamma=0$, $D(X^n)$ is the support of the posterior.
\end{definition}
(The notation for credible sets involves $X^n$ to emphasize that credible sets are constructed from the posterior, and hence, depend on the data $X^n$.) The most natural way to compile a credible set $D(X^n)$ in a discrete space like $\Theta_n$, is to calculate the posterior weights $\Pi(\{\theta\}|X^n)$ of all $\theta\in\Theta_n$, order the $\theta_n$ by decreasing posterior weight into a finite sequence $\theta_{n,1}(X^n)$, $\theta_{n,2}(X^n)$, $\ldots$, $\theta_{n,|\Theta_n|}(X^n)$, and define $D(X^n)=\{\theta_{n,1}(X^n),\ldots,\theta_{n,m}(X^n)\}$, for the smallest $m\geq1$ such that $\Pi(D(X^n)|X^n)$ is greater than or equal to the required credible level. Note that $\theta_{n,1}(X^n)$ is the \emph{maximum-a-posteriori}-estimator (which, in the case of a uniform prior, is equal to the maximum-likelihood estimator).

Similarly, the frequentist uses the notion of confidence for uncertainty quantification. 
\begin{definition}
Given an unknown $\theta_n\in\Theta_n$ and an observation $X^n\sim P_{\theta_n}$, a \emph{confidence set} $C(X^n)\subset\Theta_n$ of \emph{confidence level} $1-\alpha$, $(0<\alpha<1)$, is defined by any ($\theta_n$-independent) set-valued map $x^n\mapsto C(x^n)\subset\Theta_n$ such that,
\[
  P_{\theta_n}\bigl(\theta_n\in C(X^n)\bigr) \geq 1-\alpha.
\]
\end{definition}

In the Chernoff-Hellinger phase with a posterior that succeeds in exact recovery, all posterior mass ends up in the singleton $\{\theta_n\}$ containing the true community assignment with high probability, so it is clear that \emph{any} sequence of credible sets $D_n(X^n)$ of credible levels $1-\gamma_n$ with $\liminf_n\gamma_n>0$, will contain $\theta_n$ with high $P_{\theta_n}$-probability as $n\to\infty$. Because of theorems~\ref{thm:exactrecovery} and~\ref{thm:almostexactrecovery}, we can consider a version of this argument that holds in full generality at finite graphs size $n$.
\begin{lemma}
\label{lem:crediblesettoconfidencesets}
Fix $n\geq1$ and some prior $\Pi_n$ on $\Theta_n$, let $\theta_n\in\Theta_n$ and $X^n\sim P_{\theta_n}$ be given. Let $B\subset\Theta_n$ be a subset with expected posterior probability that is lower-bounded,
\begin{equation}
  \label{eq:posteriormassinB}
  P_{\theta_n}\Pi\bigl(\, B \bigm| X^n \bigr)\geq
  1-\beta,
\end{equation}
for some $0<\beta<1$. For any $0<\gamma<1$ and any credible set $D(X^n)\subset\Theta_n$ of level $1-\gamma$,
\[
  P_{\theta_n}\bigl(B\cap D(X^n)\neq\emptyset\bigr)
  \geq 1-\frac{\beta}{1-\gamma}.
\] 
\end{lemma}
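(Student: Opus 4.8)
The plan is to reduce the set-intersection event to a threshold event for the single random variable $Z=\Pi(B\mid X^n)$, and then close with Markov's inequality. The key preliminary observation is purely measure-theoretic and holds pointwise in the data: since $\Pi(\cdot\mid X^n)$ is a probability measure on $\Theta_n$, whenever $B$ and $D(X^n)$ are disjoint their posterior masses cannot overlap, so
\[
  \Pi(B\mid X^n)\leq 1-\Pi\bigl(D(X^n)\mid X^n\bigr)\leq 1-(1-\gamma)=\gamma,
\]
where the last step uses that $D(X^n)$ has credible level $1-\gamma$. Reading this contrapositively, whenever $\Pi(B\mid X^n)>\gamma$ the two sets must meet. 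This yields the event inclusion $\{\Pi(B\mid X^n)>\gamma\}\subseteq\{B\cap D(X^n)\neq\emptyset\}$, and hence the lower bound
\[
  P_{\theta_n}\bigl(B\cap D(X^n)\neq\emptyset\bigr)
    \geq P_{\theta_n}\bigl(\Pi(B\mid X^n)>\gamma\bigr).
\]

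Next I would bound the right-hand side. Writing $Z=\Pi(B\mid X^n)\in[0,1]$, hypothesis~(\ref{eq:posteriormassinB}) reads $P_{\theta_n}Z\geq 1-\beta$, equivalently $P_{\theta_n}(1-Z)\leq\beta$. Because $1-Z$ is a nonnegative random variable, Markov's inequality applied at the level $1-\gamma>0$ gives
\[
  P_{\theta_n}\bigl(1-Z\geq 1-\gamma\bigr)
    \leq \frac{P_{\theta_n}(1-Z)}{1-\gamma}\leq\frac{\beta}{1-\gamma}.
\]
The event $\{1-Z\geq 1-\gamma\}$ is exactly $\{Z\leq\gamma\}$, so $P_{\theta_n}(Z\leq\gamma)\leq\beta/(1-\gamma)$, and therefore $P_{\theta_n}(Z>\gamma)\geq 1-\beta/(1-\gamma)$. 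Combining this with the inclusion from the first step delivers the claimed bound.

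I do not expect a substantial obstacle: the argument is a single Markov estimate once the disjointness observation is in place. The only points deserving care are, first, that the credible-level inequality $\Pi(D(X^n)\mid X^n)\geq 1-\gamma$ is stated $P^{\Pi_n}$-almost surely, so one should note that in the present finite, full-support setting $P_{\theta_n}$ and the prior predictive $P^{\Pi_n}$ are mutually absolutely continuous (both assign positive mass to every $x^n$, as $p_n,q_n\in(0,1)$), whence the pointwise bound may be used $P_{\theta_n}$-almost surely; and second, the direction of the inequalities, since Markov's inequality as applied controls $P_{\theta_n}(Z\leq\gamma)$ and thereby produces the \emph{complementary strict} event $\{Z>\gamma\}$, which is precisely the one contained in $\{B\cap D(X^n)\neq\emptyset\}$.
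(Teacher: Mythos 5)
Your proof is correct and takes essentially the same route as the paper's: the same disjointness observation (if $B\cap D(X^n)=\emptyset$ then $\Pi(B\mid X^n)\leq\gamma$) combined with the same tail bound on $Z=\Pi(B\mid X^n)$. The paper's intermediate claim $P_{\theta_n}\bigl(\Pi(B\mid X^n)\geq r\bigr)\geq 1-\beta/(1-r)$, which it establishes by contradiction, is precisely your Markov inequality applied to $1-Z$; your direct packaging is, if anything, slightly cleaner at the boundary $r=\gamma$.
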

\begin{proof}  
We first prove that for every $0<r<1$,
\[
  P_{\theta_n}\bigl(\Pi(B| X^n)\geq r\bigr)
  \geq 1-\frac{\beta}{1-r},
\]
by contradiction: let $\delta>0$ be given and define the event,
\[
  E=\bigl\{\,x_n\in\scrX_n\,:\,
    \Pi\bigl(B\bigm| X^n=x^n\bigr)\geq r\,\bigr\}.
\]
Suppose that $P_{\theta_n}(E)\leq 1-\beta/(1-r)-\delta$. Then,
\begin{equation}
  P_{\theta_n}\Pi(B| X^n)
  \leq P_{\theta_n}(E) + r(1-P_{\theta_n}(E))
  \leq 1-\beta-\delta (1-r)<1-\beta,
\end{equation}
which contradicts the assumption that $P_{\theta_n}\Pi(B| X^n)\geq 1-\beta$. Since this holds for every $\delta>0$, we have $P_{\theta_n}(E)\geq 1-\beta/(1-r)$. Choose $r>\gamma$. As $D(X^n)$ has posterior mass of at least $1-\gamma$, $B$ and $D(x^n)$ cannot be disjoint for $x^n\in E$. So,
\[
  P_{\theta_n}\bigl(B\cap D(X^n)\neq\emptyset\bigr)
  \geq P_{\theta_n}(E)\geq 1-\frac\beta{1-\gamma},
\]
which proves the assertion.
\end{proof}
Based on the Bernstein-von~Mises theorem \citep{LeCam90} and other arguments \citep{Ghosal17,Kleijn21}), one might expect the relation between Bayesian and frequentist uncertainty quantification to involve some type of proportionality between credible and confidence levels also at finite sample sizes. Somewhat surprisingly, it emerges that the finite-sample confidence level of a credible set depends mostly on the expected amount of mis-placed posterior probability and less on the credible level.

Under the conditions of theorem~\ref{thm:exactrecovery}, condition~(\ref{eq:posteriormassinB}) holds with $\rho(p_n,q_n)$-dependent $\beta$. We record the conclusion in the form of the following proposition.
\begin{proposition}
\label{prop:exactcredconf}
For fixed $n\geq1$, suppose $X^n$ is generated according to $P_{\theta_n}$ with $\theta_n\in\Theta_n$ and choose the uniform prior on $\Theta_n$. Every credible set $D(X^n)$ of credible level $1-\gamma$ is a confidence set of confidence level,
\begin{equation}
  \label{eq:boundexactcredconf}
  P_{\theta_n}\bigl(\theta_n\in D(X^n)\bigr)
  \geq 1-\frac{n}{2(1-\gamma)}
    \rho(p_n,q_n)^{n/2}\,e^{n\rho(p_n,q_n)^{n/2}}.
\end{equation}
\end{proposition}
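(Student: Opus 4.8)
The plan is to obtain this proposition as an immediate specialization of the abstract conversion Lemma~\ref{lem:crediblesettoconfidencesets}, feeding it the quantitative exact-recovery bound of Theorem~\ref{thm:exactrecovery}. The single idea is to take the set $B$ appearing in Lemma~\ref{lem:crediblesettoconfidencesets} to be the singleton $\{\theta_n\}$ consisting of the true community assignment; the substantive work has already been carried out in the two results being combined, so what remains is to align notation.

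Concretely, I would proceed as follows. First, invoke Theorem~\ref{thm:exactrecovery} with the uniform prior to bound the expected posterior mass of the true assignment,
\[
  P_{\theta_n}\Pi\bigl(\{\theta_n\}\bigm|X^n\bigr)
  \geq 1-\frac n2\rho(p_n,q_n)^{n/2}\,e^{n\rho(p_n,q_n)^{n/2}}.
\]
This is precisely hypothesis~(\ref{eq:posteriormassinB}) of Lemma~\ref{lem:crediblesettoconfidencesets} with $B=\{\theta_n\}$ and
\[
  \beta=\frac n2\rho(p_n,q_n)^{n/2}\,e^{n\rho(p_n,q_n)^{n/2}}.
\]
Second, apply Lemma~\ref{lem:crediblesettoconfidencesets}: for any credible set $D(X^n)$ of level $1-\gamma$ one concludes $P_{\theta_n}\bigl(B\cap D(X^n)\neq\emptyset\bigr)\geq 1-\beta/(1-\gamma)$. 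The only genuine observation is that, because $B=\{\theta_n\}$ is a singleton, the event $\{B\cap D(X^n)\neq\emptyset\}$ coincides with $\{\theta_n\in D(X^n)\}$; substituting the explicit $\beta$ then reproduces~(\ref{eq:boundexactcredconf}) verbatim and shows $D(X^n)$ is a confidence set of the stated level.

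There is no real obstacle here: the conceptual content — the existence of separating tests and the resulting concentration of posterior mass on the singleton — sits entirely in Proposition~\ref{prop:postconvset}, Theorem~\ref{thm:exactrecovery}, and Lemma~\ref{lem:crediblesettoconfidencesets}. What is left is bookkeeping, with the singleton-intersection identity as the only step requiring a moment's thought. I would merely double-check that the hypothesis $0<\gamma<1$ keeps $1-\gamma>0$, so that the division in the bound is legitimate, which it is by assumption.
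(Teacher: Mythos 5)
Your proposal is correct and is exactly the paper's own argument: the paper's proof reads, in full, ``Choose $B=\{\theta_n\}$ in lemma~\ref{lem:crediblesettoconfidencesets} and use theorem~\ref{thm:exactrecovery}.'' Your additional observations (the singleton-intersection identity and $1-\gamma>0$) are sound bookkeeping that the paper leaves implicit.
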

\begin{proof}
Choose $B=\{\theta_n\}$ in lemma~\ref{lem:crediblesettoconfidencesets} and use theorem~\ref{thm:exactrecovery}.
\end{proof}
To use proposition~\ref{prop:exactcredconf} for the construction of confidence sets, one takes the following steps: practical situations involve some given graph size $n\geq1$, known edge probabilities $p_n=p$, $q_n=q$ and a realised graph $X^n=x^n$, with associated realised posterior $\Pi(\,\cdot\,|X^n=x^n)$. Given a desired confidence level $0<1-\alpha<1$, we choose credible level,
\begin{equation}
  \label{eq:credlevelexactcredconf}
  1-\gamma =\min\bigl\{1, 
  (n/2\alpha)\rho(p,q)^{n/2}\,e^{n\rho(p,q)^{n/2}}\bigr\}.
\end{equation}
With large $n$, $(n/2)\rho(p,q)^{n/2}$ is small and $1-\gamma$ lies below one for large enough graph size. We then interpret any realised credible set $D(x^n)$ of credible level $1-\gamma$ as a confidence set of level $1-\alpha$. Note that as $n$ grows or $p$ and $q$ are further apart, the credible level $1-\gamma$ is closer to zero, making the corresponding credible sets smaller.
\begin{example}
\label{ex:thirtyvertices}
With a graph containing $n=25$ vertices, edge probabilities $p=0.9$, $q=0.1$ and a desired confidence level $1-\alpha=0.95$, $\rho(p,q)=0.6$ and $(n/2)\rho(p,q)^{n/2}\approx0.0211$, so that any credible set of credible level $1-\gamma\approx0.422$ is also a confidence set of confidence level $0.95$. Keeping $p,q$ fixed, the dependence on $n$ is quite sensitive and changes sharply around the point $n=25$: for graph sizes below $n=25$, $1-\gamma$ is (close to) one (and we need to include all or most of the points that receive non-zero posterior mass in the credible set); for graph sizes (well) above $n=25$, credible levels $1-\gamma$ close to $0$ are good enough (and we need to include only a relatively small set of points with the highest amounts of posterior probability in the credible set).
\begin{figure}[htb]
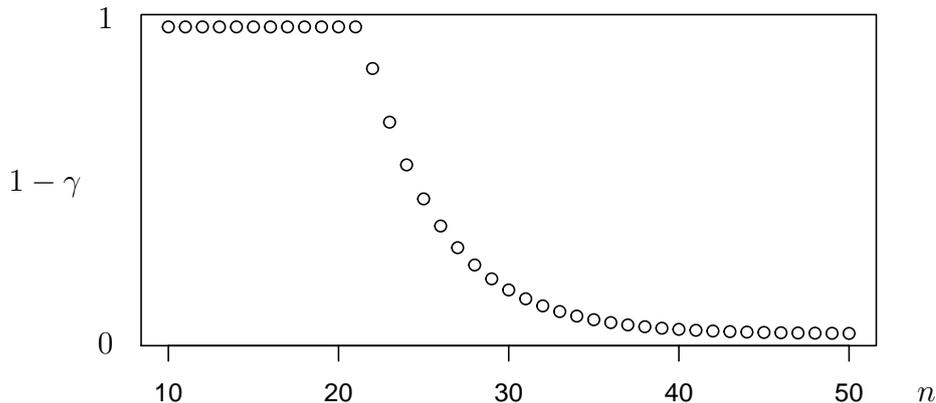

  \begin{center}
    \parbox{0.9\textwidth}{
      \begin{lpic}{crediblelevel(0.8)}
        \lbl[t]{15,82;{$1$}}
        \lbl[t]{5,55;{$1-\gamma$}}
        \lbl[t]{15,28;{$0$}}
        \lbl[t]{150,19;{$n$}}
      \end{lpic}\vspace*{-3em}
      \caption{\label{fig:crediblelevel} Credible level $1-\gamma$
      required for a confidence set of confidence level
      $1-\alpha=0.95$, as a function of graph size $n$, with fixed
      edge probabilities $p=0.9$ and $q=0.1$. There is a sharp
      decrease in required credible level around graph size
      $n=25$, indicating that the frequentist has confidence
      in community assignments of high posterior probability
      rather than in subsets of almost full posterior probability.
      In this case, the critical graph size $n(0.9,0.1;0.05)=25$.}
    }
  \end{center}
\end{figure}
At intermediate values of $n$ where $1-\gamma$ is changing from one to zero, the frequentist decides to have confidence not just in subsets of almost full posterior probability, but also in sets of smaller posterior probability, because he knowns that for large-enough graph sizes, the posterior has concentrated far enough.
\end{example}
\begin{remark}
\label{rem:ruleofthumb}
The conclusion of the previous example can also be given the following form: given a desired confidence level $1-\alpha$ and edge probabilities $p,q$, there exists a \emph{critical graph size},
\begin{equation}
  \label{eq:critgraphsize}
  n(p,q;\alpha) = \min\bigl\{ n\,:\,
    n\rho(p,q)^{n/2}\,e^{n\rho(p,q)^{n/2}}<\alpha \bigr\},
\end{equation}
where the frequentist first uses credible sets of credible level below $1/2$ as confidence sets of level $1-\alpha$. If the graph size lies (well) above $n(p,q;\alpha)$, very small credible sets (containing only the maximum-a-posteriori/maximum-likelihood estimator and a relatively small number of other community assignments of high posterior probability) are confidence sets of level $1-\alpha$; if the graph size lies below $n(p,q;\alpha)$, (most of) the support of the posterior is required to form a confidence set of level $1-\alpha$.
\end{remark}

Under the conditions of theorem~\ref{thm:almostexactrecovery}, credible sets have to be enlarged to satisfy condition~(\ref{eq:posteriormassinB}): for any credible set $D(X^n)$ and a non-negative integer $k$, we define the $k$-enlargement $C(X^n)$ of $D(X^n)$ to be the union of all Hamming balls of radius $k\geq1$ that are centred on points in $D(X^n)$,
\[
  C(X^n)=\bigl\{\theta_n\in\Theta_n:\exists_{\eta_n\in D_n(X^n)},
  k(\theta_n,\eta_n)\leq k\bigr\}.
\]
In the argument leading to proposition~\ref{prop:exactcredconf}, we only have to replace the singleton $\{\theta_n\}$ with a (Hamming-)ball $B_n(\theta_n,k)$ (see definition~(\ref{eq:definitionBkn})): according to lemma~\ref{lem:crediblesettoconfidencesets}, if $B_n(\theta_n,k)$ receives mass $1-\beta$, then the radius-$k$ enlargement of any credible set of level $1-\gamma$ is a confidence set of level $1-\beta(1-\gamma)^{-1}$.
\begin{proposition}
\label{prop:almostexactcredconf}
For fixed $n\geq1$, suppose $X^n$ is generated according to $P_{\theta_n}$ with $\theta_n\in\Theta_n$ and choose the uniform prior on $\Theta_n$. For given $0<a<1/2$, define $k=\ceiling{a n}$. Then the $k$-enlargement $C(X^n)$ of a credible set $D(X^n)$ of level $1-\gamma$ is a confidence set of confidence level,
\begin{equation}
  \label{eq:boundalmostexactcredconf}
  P_{\theta_n}\bigl(\theta_n\in C(X^n)\bigr)
  \geq 1- \frac{1}{2(1-\gamma)}
    \Bigl(\ft{e}{a}\rho(p_n,q_n)^{n/2}\Bigr)^{an}
      \Bigl(1-\ft{e}{a}\rho(p_n,q_n)^{n/2}\Bigr)^{-1}.
\end{equation}
\end{proposition}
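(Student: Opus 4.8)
The plan is to mirror the proof of Proposition~\ref{prop:exactcredconf} verbatim, replacing the singleton $\{\theta_n\}$ by the Hamming ball $B_n(\theta_n,k)$ and invoking Theorem~\ref{thm:almostexactrecovery} in place of Theorem~\ref{thm:exactrecovery}. Concretely, I would take the constant sequence $a_n=a$ and set $k_n=k=\ceiling{an}\geq an=a_nn$, so that the hypotheses of Theorem~\ref{thm:almostexactrecovery} are satisfied. That theorem then supplies the lower bound
\[
  P_{\theta_n}\Pi\bigl(B_n(\theta_n,k)\bigm|X^n\bigr)\geq 1-\beta,
  \qquad
  \beta=\tfrac12\Bigl(\tfrac{e}{a}\rho(p_n,q_n)^{n/2}\Bigr)^{an}
    \Bigl(1-\tfrac{e}{a}\rho(p_n,q_n)^{n/2}\Bigr)^{-1},
\]
which is precisely hypothesis~(\ref{eq:posteriormassinB}) of Lemma~\ref{lem:crediblesettoconfidencesets} for the choice $B=B_n(\theta_n,k)$.

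I would then apply Lemma~\ref{lem:crediblesettoconfidencesets} directly to obtain $P_{\theta_n}\bigl(B_n(\theta_n,k)\cap D(X^n)\neq\emptyset\bigr)\geq 1-\beta/(1-\gamma)$ for any credible set $D(X^n)$ of level $1-\gamma$.

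The key step---the only one that is not a direct quotation of an earlier result---is to translate the event $\{B_n(\theta_n,k)\cap D(X^n)\neq\emptyset\}$ into the event $\{\theta_n\in C(X^n)\}$. For this I would use the symmetry of the metric $k$: if the intersection is non-empty, there exists $\eta_n\in D(X^n)$ with $k(\eta_n,\theta_n)\leq k$, and by symmetry this says exactly that $\theta_n$ lies within Hamming distance $k$ of a point of $D(X^n)$, which by the defining property of the $k$-enlargement means $\theta_n\in C(X^n)$. Hence $\{B_n(\theta_n,k)\cap D(X^n)\neq\emptyset\}\subseteq\{\theta_n\in C(X^n)\}$, and monotonicity of $P_{\theta_n}$ gives
\[
  P_{\theta_n}\bigl(\theta_n\in C(X^n)\bigr)
  \geq P_{\theta_n}\bigl(B_n(\theta_n,k)\cap D(X^n)\neq\emptyset\bigr)
  \geq 1-\frac{\beta}{1-\gamma},
\]
which is the asserted bound~(\ref{eq:boundalmostexactcredconf}) once $\beta$ is substituted.

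Since every analytic ingredient is already delivered by Lemma~\ref{lem:crediblesettoconfidencesets} and Theorem~\ref{thm:almostexactrecovery}, I do not anticipate a genuine obstacle. The one point that demands care is the set inclusion above, where one must avoid conflating the ball $B_n(\theta_n,k)$ centred on the \emph{unknown} true parameter with the enlargement $C(X^n)$ built around the \emph{data-dependent} credible set; it is exactly the symmetry of $k$ that renders ``$\eta_n$ close to $\theta_n$'' and ``$\theta_n$ close to $\eta_n$'' interchangeable and lets the two constructions meet.
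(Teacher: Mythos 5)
Your proposal is correct and follows exactly the paper's route: choose $B=B_n(\theta_n,k)$ in lemma~\ref{lem:crediblesettoconfidencesets}, feed in the bound~(\ref{eq:postconcKS}) from theorem~\ref{thm:almostexactrecovery}, and translate $\{B_n(\theta_n,k)\cap D(X^n)\neq\emptyset\}$ into $\{\theta_n\in C(X^n)\}$ via the symmetry of the metric $k$. The paper compresses this to a one-line proof and leaves the last translation step implicit in the surrounding text, so your explicit treatment of that set inclusion is a faithful (and slightly more careful) rendering of the same argument.
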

\begin{proof}
Choose $B=B_n(\theta_{0,n},k)$ in lemma~\ref{lem:crediblesettoconfidencesets} and use equation~(\ref{eq:postconcKS}).
\end{proof}
Proposition~\ref{prop:almostexactcredconf} is used as follows: assume we have a
realised graph $X^n=x^n$ and known edge probabilities $p_n=p$, $q_n=q$. Denote the associated realised posterior by $\Pi(\,\cdot\,|X^n=x^n)$. For any $a>0$ and any desired confidence level $0<1-\alpha<1$, we choose credible level,
\begin{equation}
  \label{eq:credlevelalmostexactcredconf}
  1-\gamma =\min\biggl\{1, 
    \frac1{2\alpha}\Bigl(\ft{e}{a}\rho(p,q)^{n/2}\Bigr)^{an}
      \Bigl(1-\ft{e}{a}\rho(p,q)^{n/2}\Bigr)^{-1}
      \biggr\}.
\end{equation}
This expression suggests that error fractions $a$ roughly of order $\rho(p,q)^{n/2}$ are the most appropriate. For large enough $n$, $1-\gamma$ lies below one and we interpret the $\ceiling{an}$-enlargement $C(x^n)$ of any realised credible set $D(x^n)$ of credible level $1-\gamma$ as a confidence set of level $1-\alpha$.
\begin{example}
\label{ex:almostthirtyvertices}
Again we consider a graph with $n=25$ vertices, edge probabilities $p=0.9$, $q=0.1$ and a desired confidence level $1-\alpha=0.95$, $\rho(p,q)=0.6$. For $a=0.05$, $0.1$ or $0.25$ (which would allow for fixed $5\%$, $10\%$ or $25\%$ fractions of mis-assigned vertices in the Hamming balls of theorem~\ref{thm:almostexactrecovery}), we plot the required credible levels in figures~\ref{fig:almostcrediblelevel005}--\ref{fig:almostcrediblelevel025}.
\begin{figure}[htb]
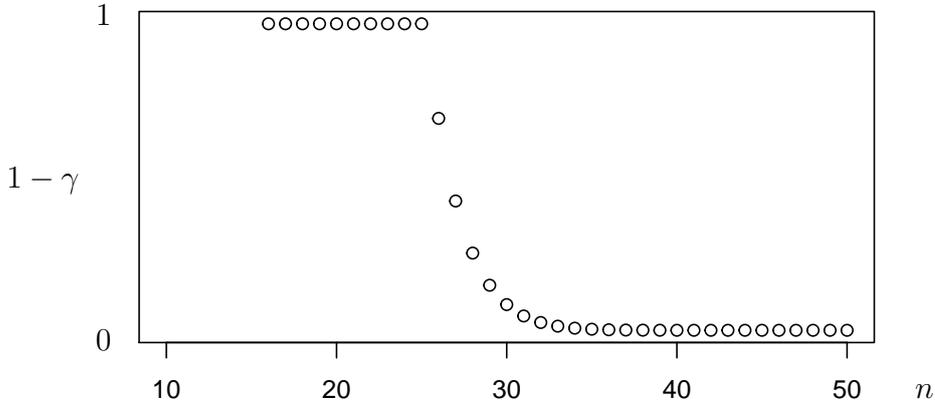

  \begin{center}
    \parbox{0.9\textwidth}{
      \begin{lpic}{enlarged005crediblelevel(0.8)}
        \lbl[t]{15,82;{$1$}}
        \lbl[t]{5,55;{$1-\gamma$}}
        \lbl[t]{15,28;{$0$}}
        \lbl[t]{150,19;{$n$}}
      \end{lpic}\vspace*{-3em}
      \caption{\label{fig:almostcrediblelevel005} Credible level $1-\gamma$
      required for a confidence set of confidence level
      $1-\alpha=0.95$ and Hamming enlargement radius
      $k=\ceiling{0.05n}$, as a function of graph size $n$,
      with fixed edge probabilities $p=0.9$ and $q=0.1$. Note
      the decrease in required credible level around the
      critical graph size $n(0.9,0.1;0.05,0.05)=27$.}
    }
  \end{center}
\end{figure}
\begin{figure}[htb]
  \begin{center}
    \parbox{0.9\textwidth}{
      \begin{lpic}{enlarged010crediblelevel(0.8)}
        \lbl[t]{15,82;{$1$}}
        \lbl[t]{5,55;{$1-\gamma$}}
        \lbl[t]{15,28;{$0$}}
        \lbl[t]{150,19;{$n$}}
      \end{lpic}\vspace*{-3em}
      \caption{\label{fig:almostcrediblelevel010} Credible level $1-\gamma$
      required for a confidence set of confidence level
      $1-\alpha=0.95$ and Hamming enlargement radius
      $k=\ceiling{0.1n}$, as a function of graph size $n$,
      with fixed edge probabilities $p=0.9$ and $q=0.1$. Note
      the decrease in required credible level around the
      critical graph size $n(0.9,0.1;0.05,0.1)=21$.}
    }
  \end{center}
\end{figure}
\begin{figure}[htb]
  \begin{center}
    \parbox{0.9\textwidth}{
      \begin{lpic}{enlarged025crediblelevel(0.8)}
        \lbl[t]{15,82;{$1$}}
        \lbl[t]{5,55;{$1-\gamma$}}
        \lbl[t]{15,28;{$0$}}
        \lbl[t]{150,19;{$n$}}
      \end{lpic}\vspace*{-3em}
      \caption{\label{fig:almostcrediblelevel025} Credible level $1-\gamma$
      required for a confidence set of confidence level
      $1-\alpha=0.95$ and Hamming enlargement radius
      $k=\ceiling{0.25n}$, as a function of graph size $n$,
      with fixed edge probabilities $p=0.9$ and $q=0.1$. Note
      the decrease in required credible level around the
      critical graph size $n(0.9,0.1;0.05,0.25)=14$.}
    }
  \end{center}
\end{figure}
In the Kesten-Stigum phase (\cf\ theorem~\ref{thm:almostexactrecovery}), given a desired confidence level $1-\alpha$ and edge probabilities $p,q$, there again exists a \emph{critical graph size},
\begin{equation}
  \label{eq:almostcritgraphsize}
  n(p,q;\alpha,a) = \min\biggl\{ n\,:\,
    \frac{1}{\alpha}\Bigl(\ft{e}{a}\rho(p,q)^{n/2}\Bigr)^{an}
      \Bigl(1-\ft{e}{a}\rho(p,q)^{n/2}\Bigr)^{-1}
    <\alpha \biggr\},
\end{equation}
where the frequentist first uses $\ceiling{an}$-enlarged credible sets of credible level below $1/2$ as confidence sets of level $1-\alpha$. Required credible levels depend on our parameter choices as expected: if we raise the error rate from $0.05n$ to $0.25n$, the enlargement radius of credible sets grows and the required credible level decreases accordingly.
\end{example}
\begin{remark}
\label{rem:compareexactalmostexact}
To conclude we compare the bounds of propositions~\ref{prop:exactcredconf} and~\ref{prop:almostexactcredconf}: although the asymptotic definitions of the Chernoff-Hellinger and Kesten-Stigum phases suggest that we are in one or the other phase, at finite graph sizes this is inconsequential, since both bounds~(\ref{eq:boundexactcredconf}) and~(\ref{eq:boundalmostexactcredconf}) are valid and one can either choose to use credible sets of the level required by~(\ref{eq:credlevelexactcredconf}) or $\ceiling{an}$-enlarged credible sets of the level required by~(\ref{eq:credlevelalmostexactcredconf}), whichever are the smallest. Much will depend on the graph size: if $n$ lies below the critical graph size~(\ref{eq:critgraphsize}) but above the critical graph size~(\ref{eq:almostcritgraphsize}) for some $a>0$, then $\ceiling{an}$-enlarged credible sets may be preferred.
\end{remark}

\section{Discussion}
\label{sec:discussion}

The results summarized in subsection~\ref{sub:conclusions} bear some speculation regarding further exploration.

First of all the question arises whether the sufficient conditions given in section~\ref{sec:postconcentration} are also necessary. This question is interesting in its own right, but it is also important for confidence sets: if upper bounds like~(\ref{eq:boundexactcredconf}) and~(\ref{eq:boundalmostexactcredconf}) are not sharp, lower bounds for credible levels as in~(\ref{eq:credlevelexactcredconf}), (\ref{eq:credlevelalmostexactcredconf}) become unnecessary stringent and enlargement radii become unnecessarily large. It is noted that the construction of lemma~\ref{lem:crediblesettoconfidencesets} is fully general and can also be applied in other models, \eg\ with continuous parameters. In fact, the proof of the celebrated Ghosal-Ghosh-van~der~Vaart theorem \citep{Ghosal00} ends in a statement of the form (\ref{eq:posteriormassinB}) that is almost specific enough to be useful in the present context. Methods put forth in \citep[particularly, theorem~4.2 with so-called \emph{remote contiguity} as in definition~3.4]{Kleijn21} can be used directly.

Regarding uncertainty quantification in the stochastic block model, the regime where $n$ is large enough to require only small amounts of Bayesian credibility for a desired confidence level is most interesting. The space of community assignments $\Theta_n$ has cardinal $2^{n-1}$, so for large graph sizes $n$, MCMC-type samples are likely too small to properly represent the full posterior distribution. Those small samples tend to under-represent mostly the tails and not so much the bulk of the probability mass. When integrals with respect to the posterior are of interest (\eg\ the posterior mean or other minimizers of Bayesian risk functions), the tails are crucial in the calculation. But, since only community assignments with relatively high posterior probabilities are required in credible sets of low credible level, small MCMC samples may not hamper the construction of confidence sets to the same extent. This leads to the speculation that some form of \emph{early stopping} of the MCMC sequence may be justified, to enable the analysis of confidence sets not just for graph sizes where simulation of the full posterior is realistic, but possibly also for graph sizes that are (much?) larger. A numerical study could be based on cross validation of confidence levels for simulated stochastic block graphs of various sizes, to find out exactly how early one can stop the MCMC sequence.

Indeed for large values of $n$, posterior mass is concentrated almost entirely in the maximum-a-posteriori estimator (\cf\ theorem~\ref{thm:exactrecovery}) (or in Hamming balls of radii $\ceiling{an}$ surrounding the maximum-a-posteriori estimator (\cf\ theorem~\ref{thm:almostexactrecovery})), while the required credible level is low enough to let the singleton of the maximum-a-posteriori estimator (or the corresponding Hamming ball) be a valid confidence set of the desired confidence level. That perspective explains the connection with asymptotic correspondences between credible and confidence sets \citep{Kleijn18,Kleijn21}, and it would simplify the very-large-graph version of the above identification to a search for the maximum-a-posteriori estimator and a suitable choice for the error rate $a$.


\appendix


\section{Notation and conventions}
\label{app:defs}

Asymptotic statements that end in ``... with high probability''indicate that said statements are true with probabilities that grow to one as the graph size $n$ goes to infinity. The integral of a real-valued, integrable random variable $X$ with respect to a probability measure $P$ is denoted $PX$, while integrals (or, rather, sums) over the model with respect to priors and posteriors are always written out in Leibniz's or sum notation. The cardinality of a set $B$ is denoted $|B|$.

\subsection{Definitions for priors and posteriors}

For Bayesian notation, we follow \citep{Kleijn21}: assume given for every $n\geq1$, a random graph $X^n$ taking values in the (finite) space $\scrX_n$ of all undirected graphs with $n$ vertices. We denote the powerset of $\scrX_n$ by $\scrB_n$ and regard it as the domain for probability distributions $P:\scrB_n\to[0,1]$ in a model $\scrP_n$, parametrized by $\Theta_n\rightarrow\scrP_n:\theta_n\mapsto P_{\theta_n}$ with finite parameter spaces $\Theta_n$ (with powerset $\scrG_n$) and uniform priors $\Pi_n$ on $\theta_n$. As frequentists, we assume that there exists a `true, underlying distribution for the data'; in this case, that means that for every $n\geq1$, there exists a $\theta_n\in\theta_n$ and corresponding $P_{\theta_n}$ from which the $n$-th graph $X^n$ is drawn.
\begin{definition}
\label{def:priorpredictive}
Given $n\geq1$ and a prior probability measure $\Pi_n$ on $\theta_n$, define the \emph{$n$-th prior predictive distribution} as:
\begin{equation}
  \label{eq:priorpred}
  P^{\Pi_n}(X^n\in A) = \int_{\Theta_n} P_{\theta}(X^n\in A)\,d\Pi_n(\theta),
\end{equation}
for all $A\in\scrB_n$.
\end{definition}
The prior predictive distribution $P^{\Pi_n}$ is the marginal distribution for $X^n$ in the Bayesian perspective that considers parameter and sample jointly $(\theta,X^n)\in\Theta\times\scrX_n$ as the random quantity of interest.
\begin{definition}
\label{def:posterior}
Given $n\geq1$, \emph{(a version of) the posterior} is any set-function $\scrG_n\times\scrX_n\rightarrow[0,1]: (A,x^n)\mapsto\Pi(\,\theta\in A\,|X^n=x^n)$ such that,
\begin{enumerate}
\item for $B\in\scrG_n$, the map $x^n\mapsto\Pi(B|X^n=x^n)$ is $\scrB_n$-measurable,
\item for all $A\in\scrB_n$ and $V\in\scrG_n$,
  \begin{equation}
    \label{eq:disintegration}
    \int_A\Pi(\theta\in V|X^n=x^n)\,dP^{\Pi_n}(x^n) = 
    \int_V P_{\theta}(X^n\in A)\,d\Pi_n(\theta).
  \end{equation}
\end{enumerate}
\end{definition}
Bayes's Rule is expressed through equality~(\ref{eq:disintegration}) and is sometimes referred to as a `disintegration' (of the joint distribution of $(\theta,X^n)$). Because the models $\scrP_n$ are dominated (denote the density of $P_{\theta}$ by $p_{\theta}$), the fraction of integrated likelihoods,
\begin{equation}
  \label{eq:posteriorfraction}
  \Pi(\theta\in V|X^n)= 
  {\displaystyle{\int_V p_{\theta}(X^n)\,d\Pi_n(\theta)}} \biggm/
  {\displaystyle{\int_{\Theta_n} p_{\theta}(X^n)\,d\Pi_n(\theta)}},
\end{equation}
for $V\in\scrG_n$, $n\geq1$ defines a version of the posterior distribution.


\section{Tests for community assignment}
\label{app:PBMtests}

Given $n\ge 1$, and two community assignments $\theta,\eta\in\Theta_n$,
we are interested in a test that distinguishes one from the other and the
corresponding testing power.

\subsection{Existence of tests for community assignments}

We base the test on the likelihood ratio
$dP_{\eta}/dP_{\theta}$.
Fix $n\ge 1$, let $X^n$ denote the random graph associated with
$\theta\in\Theta_n$ and let \(m_\theta\) be the number of 1-labels of
\(\theta\), so \(\theta\in \Theta_{n,m_\theta}\). Let $\eta$ denote another
element of $\Theta_n$ and suppose \(\eta\in \Theta_{n ,m_\eta}\), for
some \(m_\eta \in\set{0,\ldots,\floor{n/2}}\) (which might or might not
be equal to \(m_\theta\)). 
Compare $p_{\theta}(X^n)$ with $p_{\eta}(X^n)$ in the likelihood
ratio. Based on the probability density for $P_{\theta}$ and the
definitions of the edge sets $D_1$ and $D_2$ of (\ref{eq:thesetsD}),
we define,
\[
  (S_n,T_n):=\Bigl(\sum\{X_{ij}:(i,j)\in D_{1}(\theta,\eta)\},
    \sum \{X_{ij}:(i,j)\in D_{2}(\theta,\eta)\}\Bigr),
\]
and note that,
\begin{equation}
  \label{eq:SnTn}
  (S_n,T_n)\sim\begin{cases}
  \text{Bin}(|D_{1}(\theta,\eta)|,p_n)\times\text{Bin}(|D_{2}(\theta,\eta)|,q_n),
    \quad\text{if $X^n\sim P_{\theta}$},\\
  \text{Bin}(|D_{1}(\theta,\eta)|,q_n)\times\text{Bin}(|D_{2}(\theta,\eta)|,p_n),
    \quad\text{if $X^n\sim P_{\eta}$}.
  \end{cases}
\end{equation}
Since $S_n$ and $T_n$ are independent, the likelihood ratio can be written in terms of
the moment generating functions for two binomial random variables: 
\begin{equation}
  \label{eq:pbmlikratio}
  \frac{p_{\eta}}{p_{\theta}}(X^n)
  = \biggl(\frac{1-p_n}{p_n}\,\frac{q_n}{1-q_n}\biggr)^{S_n-T_n}
    \biggl(\frac{1-q_n}{1-p_n}\biggr)^{|D_{1,n}|-|D_{2,n}|}.
\end{equation} 
This gives rise to the following lemma:
\begin{lemma}
\label{lem:testingpower}
Let $n\geq1$, $\theta,\eta\in\Theta_n$ be given. Then there
exists a test function $\phi:\scrX_n\to[0,1]$ such that,
\[
  \begin{split}
  \pi_n(\theta)P_{\theta}\phi(X^n) + \pi_n(\eta) & P_{\eta}(1-\phi(X^n))\\
    &\leq
    \pi_n(\theta)^{1/2}\pi_n(\eta)^{1/2}\rho(p_n,q_n)^{|D_{1,n}|+|D_{2,n}|}.
  \end{split}
\]
\end{lemma}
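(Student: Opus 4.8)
The plan is to exhibit the optimal (Neyman--Pearson) test and to bound its weighted error by the Hellinger affinity between $P_\theta$ and $P_\eta$, and then to compute that affinity from the product (edgewise independent) structure of the graph. This is the standard minimax-testing argument specialized to the two-community model.

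First I would observe that for an arbitrary test $\phi:\scrX_n\to[0,1]$ the weighted error decomposes as a sum over graphs $x^n\in\scrX_n$,
\[
  \pi_n(\theta)P_\theta\phi(X^n) + \pi_n(\eta)P_\eta(1-\phi(X^n))
  = \sum_{x^n}\bigl(\pi_n(\theta)p_\theta(x^n)\phi(x^n)
    + \pi_n(\eta)p_\eta(x^n)(1-\phi(x^n))\bigr).
\]
Each summand is affine in $\phi(x^n)\in[0,1]$, so it is minimized termwise by the likelihood-ratio test $\phi(x^n)=\mathbf{1}\{\pi_n(\eta)p_\eta(x^n)>\pi_n(\theta)p_\theta(x^n)\}$; this is the test I would produce, and its value equals $\sum_{x^n}\min\{\pi_n(\theta)p_\theta(x^n),\pi_n(\eta)p_\eta(x^n)\}$.

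Next I would apply the elementary bound $\min\{a,b\}\le\sqrt{ab}$ to each term and factor out the prior weights,
\[
  \sum_{x^n}\min\{\pi_n(\theta)p_\theta(x^n),\pi_n(\eta)p_\eta(x^n)\}
  \le \pi_n(\theta)^{1/2}\pi_n(\eta)^{1/2}
    \sum_{x^n}\sqrt{p_\theta(x^n)p_\eta(x^n)},
\]
so that the task reduces to identifying the affinity $\sum_{x^n}\sqrt{p_\theta(x^n)p_\eta(x^n)}$ with $\rho(p_n,q_n)^{|D_1(\theta,\eta)|+|D_2(\theta,\eta)|}$.

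Finally, since the edge variables $X_{ij}$ are independent under both $P_\theta$ and $P_\eta$, the density factorizes over edges and hence so does the affinity, into a product of per-edge Bernoulli affinities. For an edge outside $D_1(\theta,\eta)\cup D_2(\theta,\eta)$ the two edge probabilities coincide (both $p_n$, or both $q_n$), contributing a factor $1$; for an edge in $D_1(\theta,\eta)$ the pair is $(p_n,q_n)$ and for an edge in $D_2(\theta,\eta)$ it is $(q_n,p_n)$, each contributing the Bernoulli affinity~(\ref{eq:AffBernoulli}), which is symmetric, $\rho(p_n,q_n)=\rho(q_n,p_n)$. The product therefore equals $\rho(p_n,q_n)^{|D_1(\theta,\eta)|+|D_2(\theta,\eta)|}$, yielding the claim. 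The only delicate point is the bookkeeping of which edges contribute a nontrivial factor, which is settled by the definitions in~(\ref{eq:thesetsD}) together with the symmetry of $\rho$ that lets $D_1$- and $D_2$-edges contribute equally; the rest is the standard affinity bound for the minimax test.
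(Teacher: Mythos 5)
Your proof is correct. The overall strategy coincides with the paper's: exhibit the (weighted) likelihood-ratio test, bound its weighted error by $\pi_n(\theta)^{1/2}\pi_n(\eta)^{1/2}$ times the Hellinger affinity between $P_\theta$ and $P_\eta$, and identify that affinity with $\rho(p_n,q_n)^{|D_1|+|D_2|}$. Where you differ is in execution, and in both places your route is the more elementary one. First, the paper does not prove the testing bound at all --- it cites lemma~2.7 of \citep{Kleijn21} (ultimately Le~Cam's minimax-test bound); you derive it from scratch via termwise minimization of the affine integrand and $\min\{a,b\}\le\sqrt{ab}$, which makes the lemma self-contained. Second, the paper computes the affinity by rewriting the likelihood ratio in terms of the binomial statistics $S_n,T_n$ supported on $D_1$ and $D_2$ and then evaluating moment-generating functions of binomial distributions; you instead factorize the affinity directly over the independent edge variables, observing that edges outside $D_1\cup D_2$ contribute a factor $1$ and edges in $D_1\cup D_2$ each contribute $\rho(p_n,q_n)$ (using the symmetry $\rho(p,q)=\rho(q,p)$ for the $D_2$-edges). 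The two computations give the same answer; yours avoids the MGF machinery entirely, while the paper's version has the side benefit of making the sufficient statistics $(S_n,T_n)$ and their distributions under $P_\theta$ and $P_\eta$ explicit, which is the content of equation~(\ref{eq:SnTn}) and is reused conceptually elsewhere. No gaps.
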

\begin{proof}
The likelihood ratio test $\phi(X^n)$ has testing power bounded by
the Hellinger affinity (see \cite{LeCam86} and \cite[lemma 2.7]{Kleijn21}),
\[
  \pi_n(\theta)P_{\theta}\phi(X^n) + \pi_n(\eta) P_{\eta}(1-\phi(X^n))
\leq  \pi_n(\theta)^{1/2}\pi_n(\eta)^{1/2}
    P_{\theta}\Bigl(\frac{p_{\eta}}{p_{\theta}}(X^n)\Bigr)^{1/2}.
\]
The Hellinger affinity is bounded as follows,
\[
  \begin{split}
  P_{\theta}\Bigl(\frac{p_{\eta}}{p_{\theta}}(X^n)\Bigr)^{1/2}
    &= P_{\theta}
    \biggl(\frac{p_n}{1-p_n}\,\frac{1-q_n}{q_n}\biggr)^{\ft12(T_n-S_n)}
    \biggl(\frac{1-q_n}{1-p_n}\biggr)^{\ft12(|D_{1,n}|-|D_{2,n}|)}\\
    &= Pe^{\ft12\lambda_nS_n}\,Pe^{-\ft12\lambda_nT_n}
    \biggl(\frac{1-q_n}{1-p_n}\biggr)^{\ft12(|D_{1,n}|-|D_{2,n}|)},
  \end{split}
\]
where $\lambda_n:=\log(1-p_n)-\log(p_n)+\log(q_n)-\log(1-q_n)$. Using
the moment-generating function of the
binomial distribution, we conclude that,
\[
  \begin{split}
  &P_{\theta}\biggl(\frac{p_{\eta}}
    {p_{\theta}}(X^n)\biggr)^{1/2}
  =\Bigl(1-p_n
      +p_n\Bigl(\frac{1-p_n}{p_n}\,\frac{q_n}{1-q_n}\Bigr)^{1/2}
      \Bigr)^{|D_{1,n}|}\\
  &   \qquad\times\Bigl(1-q_n
      +q_n\Bigl(\frac{p_n}{1-p_n}\,\frac{1-q_n}{q_n}\Bigr)^{1/2}
      \Bigr)^{|D_{2,n}|}
      \biggl(\frac{1-q_n}{1-p_n}\biggr)^{\ft12(|D_{1,n}|-|D_{2,n}|)}\\
  &= \rho(p_n,q_n)^{|D_{1,n}|+|D_{2,n}|},
  \end{split}
\]
which proves the assertion. 
\end{proof}

\subsection{Lower bounds for the sizes of edge sets}
\label{sub:lowerboundsDs}

Testing power for one community assignment versus the other grows when the
edge sets $D_1$ and $D_2$ have many elements. It is therefore of interest
to find (sharp) lower bounds. To that end, note that $\set{1,\ldots,n}$ is
the disjoint union $V_{00}\cup V_{01}\cup V_{10}\cup V_{11}$, where
$V_{ab} = \set{i: \theta_i=a,\eta_i=b}$. In the edge sets we only
count pairs $(i,j)$ with $i<j$, so,
\[
  \begin{split}
  |D_{1}(\theta,\eta)| &
    = |V_{00}|\cdot |V_{01}|+ |V_{11}|\cdot |V_{10}|,\\
  |D_{2}(\theta,\eta)| &= |V_{00}|\cdot |V_{10}|+ |V_{01}|\cdot |V_{11}|.
  \end{split}
\]
So that,
\[
  |D_{1}(\theta,\eta)|+|D_{2}(\theta,\eta)|
  = \bigl(|V_{00}|+|V_{11}|\bigr)\bigl(|V_{01}|+|V_{10}|\bigr).
\]
With $k=|V_{10}|+|V_{01}|$ (and using that $
|V_{00}|+|V_{01}|+|V_{10}|+|V_{11}|=n$), we find that $
|V_{00}|+|V_{11}|=n -k$, and we arrive at,
\begin{equation}
\label{eq:sizeofD1andD2inVnmk}
  |D_{1}(\theta,\eta)|+ |D_{2}(\theta,\eta)|= k(n-k). 
\end{equation}
Note that, 
\[
  \min_{\theta\in\Theta_{n,m_1},\eta\in\Theta_{n,m_2}}k(\theta,\eta)=|m_1-m_2|. 
\]
As $m_1,m_2\in\{0,\ldots,\floor{n/2}\}$, $|m_1-m_2|\in\{0,\ldots,\floor{n/2}\}$.
and since $k(n-k)$ is increasing in $k$ on $\set{0,\ldots,\floor{n/2}}$, we have,
\begin{equation}
  \label{eq:minovertwosetsofd}
  \min_{\theta\in\Theta_{n,m_1},\eta\in\Theta_{n,m_2}}
    (|D_{1}(\theta,\eta)|+ |D_{2}(\theta,\eta)|)
    = |m_1-m_2|(n-|m_1-m_2|). 
\end{equation}

\section{Auxiliary results}
\label{sec:aux}

\begin{lemma}
\label{lem:oneplusxdivrtothepowerrissmallerthanetothepowerx} 
For all positive integers \(r\) and real \(x>-r\), \((1+x/r)^r \le e^x\).
\end{lemma}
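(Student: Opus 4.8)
The plan is to reduce the claim to the elementary tangent-line inequality $1+t\le e^t$, valid for every real $t$, and then to exploit the positivity of the base $1+x/r$ guaranteed by the hypothesis $x>-r$.

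First I would establish (or simply invoke as standard) the inequality $1+t\le e^{t}$ for all $t\in\RR$. This is immediate from convexity of the exponential: the line $t\mapsto 1+t$ is the tangent to $t\mapsto e^{t}$ at $t=0$, and a convex function lies above each of its tangent lines. Alternatively one notes that $g(t)=e^{t}-1-t$ satisfies $g(0)=0$, $g'(t)=e^{t}-1$, so $g$ is decreasing on $(-\infty,0)$ and increasing on $(0,\infty)$, whence $g(t)\ge g(0)=0$ everywhere.

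Next I would substitute $t=x/r$, which is permissible for any real $x$ and any positive integer $r$, to obtain
\[
  1+\frac{x}{r}\le e^{x/r}.
\]
Here the hypothesis $x>-r$ enters decisively: it is exactly the condition that makes $1+x/r>0$, so that the left-hand side is a positive real. Raising a valid inequality between nonnegative reals to the $r$-th power preserves its direction (the map $u\mapsto u^{r}$ is increasing on $[0,\infty)$ for positive integer $r$), so
\[
  \Bigl(1+\frac{x}{r}\Bigr)^{r}\le \bigl(e^{x/r}\bigr)^{r}=e^{x},
\]
which is the assertion.

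There is essentially no hard step here; the only point requiring care is precisely the role of the hypothesis $x>-r$. Without it the base $1+x/r$ could be nonpositive, in which case raising to the $r$-th power need not preserve the inequality (for even $r$ one could even flip signs), so the condition $x>-r$ is not merely technical but genuinely needed to justify the final exponentiation.
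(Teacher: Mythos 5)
Your proof is correct and takes essentially the same route as the paper's: both reduce to the tangent-line inequality $1+t\le e^{t}$ (equivalently $\log(1+t)\le t$, which the paper establishes by comparing the derivatives of $r\log(1+x/r)$ and $x$) and then pass to the $r$-th power. Your explicit observation that the hypothesis $x>-r$ is what legitimises raising the inequality to the $r$-th power is a point the paper leaves implicit.
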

\begin{proof}
Let for \(x>-r\), \(f(x)= r\log(1+x/r)\) and \(g(x)=x\). Then \(f'(x)=(1+x/r)^{-1}\)
and \(g'(x)=1\). Then \(f'(x)\leq g'(x)\), when \(x\geq 0\), \(f'(x)>g'(x) \)
when \(-n<x<0\) and \(f(0)=g(0)\). It follows that \(f(x)\leq g(x)\) for all \(x>-r\).
As \(y\to e^y\) is increasing for all real \(y\), we find \(x>-n\),
\((1+x/r)^r = e^{f(x)}\le e^{g(x)}=e^x\).
\end{proof}
\begin{lemma}
\label{lem:boundforbinomialsum}
For $x\in[0,1]$, 
\[
  \sum_{k=1}^{\floor{n/2}} \binom{n}{k} x^{k(n-k)}
  \leq 2((1+x^{n/2})^n -1) \leq 2nx^{n/2}  e ^ {nx^{n/2}}.  
\]
\end{lemma}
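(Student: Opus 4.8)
The plan is to prove the two inequalities separately, after reducing everything to the single variable $y=x^{n/2}\in[0,1]$.

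For the first inequality, the key observation is that on the summation range $1\leq k\leq\floor{n/2}$ one has $n-k\geq n/2$, and hence $k(n-k)\geq kn/2$. Since $x\in[0,1]$, raising to a larger exponent only decreases the value, so $x^{k(n-k)}\leq x^{kn/2}=(x^{n/2})^k=y^k$. Summing against the binomial coefficients and then completing the partial sum to a full binomial expansion gives
\[
  \sum_{k=1}^{\floor{n/2}}\binom nk x^{k(n-k)}
  \leq \sum_{k=1}^{\floor{n/2}}\binom nk y^k
  \leq \sum_{k=1}^{n}\binom nk y^k
  = (1+y)^n-1
  \leq 2\bigl((1+y)^n-1\bigr),
\]
which is the first claimed bound (in fact established with a factor $2$ to spare, the extra room being harmless).

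For the second inequality it suffices to show $(1+y)^n-1\leq ny\,e^{ny}$. Here I would invoke lemma~\ref{lem:oneplusxdivrtothepowerrissmallerthanetothepowerx} with $r=n$ applied to the value $ny>-n$, giving $(1+y)^n=(1+ny/n)^n\leq e^{ny}$, so that $(1+y)^n-1\leq e^{ny}-1$. The proof then closes with the elementary estimate $e^{t}-1=\int_0^t e^s\,ds\leq t\,e^{t}$, valid for $t\geq0$ because $e^s\leq e^t$ on $[0,t]$; applied with $t=ny\geq0$ this yields $e^{ny}-1\leq ny\,e^{ny}$. Re-substituting $y=x^{n/2}$ and multiplying through by $2$ delivers the stated right-hand bound $2nx^{n/2}e^{nx^{n/2}}$.

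I do not anticipate a genuine obstacle: the estimate is routine once the exponent bound $k(n-k)\geq kn/2$ is in place. The only points requiring care are the direction of the inequality when raising $x\in[0,1]$ to a larger power, and the two one-variable facts used at the end, namely $(1+y)^n\leq e^{ny}$ (already available as lemma~\ref{lem:oneplusxdivrtothepowerrissmallerthanetothepowerx}) and $e^{t}-1\leq t\,e^{t}$ for $t\geq0$.
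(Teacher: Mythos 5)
Your proof is correct and follows essentially the same route as the paper's: bound the exponent via $k(n-k)\geq kn/2$ on the summation range, complete to the binomial expansion $(1+x^{n/2})^n-1$, and finish with Lemma~\ref{lem:oneplusxdivrtothepowerrissmallerthanetothepowerx} together with $e^t-1\leq te^t$. If anything, you are more explicit than the paper about the final elementary estimate $e^t-1\leq te^t$, which the paper leaves implicit.
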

\begin{proof}
Define $a_k=\binom{n}{k}x^{k(n-k)}$ and note that $a_k=a_{n-k}$. Since $x\in[0,1]$
and $n-k\geq n/2$ for all $k\in\set{1,\ldots,\floor{n/2}}$, Newton's binomium
gives rise to,  
\[
  \sum_{k=1}^{\floor{n/2}} \binom{n}{k} x^{k(n-k)} 
  \leq \sum_{k=1}^{\floor{n/2}} \binom{n}{k} x^{kn/2}
  \leq \bigl((1+x^{n/2})^n -1\bigr) \leq nx^{n/2}  e ^ {nx^{n/2}}.
\]
where the last inequality is based on
lemma~\ref{lem:oneplusxdivrtothepowerrissmallerthanetothepowerx}.
\end{proof}

\end{document}